\title{Matchings in Random Biregular Bipartite Graphs}
\author{Guillem Perarnau and Giorgis Petridis\\ \\
  \emph{Departament de Matem\`atica Aplicada $IV$.}\\ 
  \emph{Universitat Polit\`ecnica de Catalunya, BarcelonaTech.}\\
  \url{guillem.perarnau@ma4.upc.edu}\\ \\
  \emph{}\\
  \url{giorgis@cantab.net}
}
\date{}
\theoremstyle{plain}
\newtheorem{theorem}{Theorem}
\newtheorem{lemma}[theorem]{Lemma}
\newtheorem{proposition}[theorem]{Proposition}
\newtheorem{corollary}[theorem]{Corollary}
\theoremstyle{definition}
\newtheorem*{definition}{Definition}
\newtheorem*{acknowledgement}{Acknowledgement}
\newtheorem*{remark}{Remark}
\theoremstyle{remark}
\newtheorem*{claim}{Claim}
\newcommand{\rst}[1]{\ensuremath{{\mathbin\upharpoonright}%
\raise-.5ex\hbox{$#1$}}} 
\newcommand{\comment}[1]{\textbf{[#1]}} 
\newcommand{\giorgis}[1]{{\bf [~Giorgis:\ } {\em #1}{\bf~]}} 
\newcommand{\whp}{\textit{whp\;}}
\newcommand{\I}{\mathcal{I}}
\newcommand{\Z}{\mathbb{Z}}
\newcommand{\Q}{\mathbb{Q}}
\newcommand{\G}{\mathcal{G}}
\newcommand{\E}{\mathbb{E}}
\newcommand{\eps}{\varepsilon}
\newcommand{\ga}{\Gamma} 
\newcommand{\gai}{\Gamma^{-1}} 
\newcommand{\var}{\mathrm{Var}} 
\newcommand{\tends}{\rightarrow} 
\begin{document}

\pagenumbering{arabic}

\setcounter{section}{0}

\bibliographystyle{plain}

\maketitle

\onehalfspace

\thispagestyle{empty}

\begin{abstract}
We study the existence of perfect matchings in suitably chosen induced subgraphs of random biregular bipartite graphs. We prove a result similar to a classical theorem of Erd\H{o}s and R\'enyi about perfect matchings in random bipartite graphs. We also present an application to commutative graphs, a class of graphs that are featured in additive number theory.
\end{abstract}

{\small\textbf{Keywords}: Random biregular bipartite graphs, Perfect matchings, Commutative graphs.}

\section{Introduction}\label{Intro}

Let us begin by defining the terms that appear in the title. Recall that, given two sets $A$ and $B$
of equal size and a bipartite directed graph on vertex set $(A,B)$,  a \emph{perfect matching} (also
known as a \emph{ $1$-factor}) from $A$ to $B$ is a collection of $|A|$ vertex disjoint edges from
$A$ to $B.$

\begin{definition}\label{G(k,n,d)}
Let $k\in \Q^+$ be a positive rational number, $n\in \Z^+$ a positive integer that satisfies $kn\in
\Z^+$ and $d\in \Z^+$ a positive integer that satisfies $1\leq d \leq n$ and $kd \in \Z^+.$ Let $Y$
be a set of size $n$ and $Z$ be a set of size $kn.$ Define $\G(k,n,d)$ to be the family of biregular
bipartite directed labelled graphs on the vertex set $(Y,Z)$ (with edges directed from $Y$ to $Z$)
where $d^+(y)=kd$ for all $y\in Y$ and $d^-(z)=d$ for all $z\in Z.$ A \emph{random biregular
bipartite directed graph} (with parameters $k,n,d$) is a graph chosen from $\G(k,n,d)$ uniformly at
random. The corresponding model of random graphs is denoted by $G(k,n,d).$
\end{definition}

The family $\G(k,n,d)$ is non-empty. We illustrate this by giving an example for integer $k$, which is
indicative of how biregular bipartite graphs are featured in additive number theory. We identify $Z$
with $\Z_{kn}$ and $Y$ with the subgroup $\{0,k,2k,\dots,(n-1)k\}.$ For $y\in Y$ and $z\in Z$ we
place an edge $yz\in E(G)$ if $z-y \in \{0,1,\dots kd-1\} \mod (kn).$ The resulting graph is a
member of $\G(k,n,d).$

The case where $k=1$ has a special relevance since $\G(1,n,d)$ is the family of
regular bipartite graphs of size $n$ and degree $d$ where the edges are canonically
oriented from one stable set to the other.
Estimating the size of $\G(1,n,d)$ as a function of $d$ and $n$ is a
question that has been studied extensively~\cite{Everett-Stein1971,Mineev-Pavlov1976}. 
Generalizations of this problem to biregular bipartite
graphs~\cite{McKay-Wang2003,Canfield-McKay2005} as well as to graphs with a prescribed sequence
of degrees in each of the stables have also been studied~\cite{McKay1984,McKay1985}.


Using Hall's theorem it is straightforward to check that every member of $\G(1,n,d)$ has a perfect
matching (see e.g.~\cite[Corollary $2.1.3$]{Diestel2005}).
For members of $\G(k,n,d)$ with $k\neq1$ there can be no perfect matching as the size of the two
layers is not equal. The distribution of the number of perfect matchings in random regular bipartite
graphs was studied by Bollob\'as and McKay in~\cite{Bollobas-McKay1986}, where its expected value
and variance are determined. 


We tackle a different kind of question by studying the existence of a perfect matching in induced
subgraphs $H$ of members of $\G(k,n,d)$, whose stable sets have equal size. In particular
we determine how the probability of having such a perfect matching changes with $d$. Our result is
analogous to a classical result of Erd\H{o}s and R\'enyi.

Before stating the main result of the paper we recall that in any model of random graphs a property holds \emph{with high probability} if the probability that a random graph in the model satisfies this property tends to $1$ as $n$ tends to infinity.  From now on the phrase will be abbreviated to \textit{whp}, as it is common in the literature.

\begin{theorem}\label{G[A,B]}
Let $k\in \Q^+,$ $n\in \Z^+ $ be arbitrarily large and $d\in \{1,\dots,n\}$ and suppose that $kn, kd\in \Z^+$ with $kd\leq n.$  

Furthermore let $Y$ and $Z$ be sets of size respectively $n$ and $kn$ and $G\sim G(k,n,d)$. Take subsets
$A\subseteq Y$ and $B\subseteq Z$ of size $kd$ and define $H:=G[A,B]$ to be the subgraph induced by $G$ on vertex set $(A,B)$. Then
\begin{itemize}
\item[(i)] No perfect matching exists in $H$ \whp when $\frac{kd^2}{n}-\log(kd) \tends
-\infty$ or when $d$ is a constant.
\item[(ii)] A perfect matching exists in $H$ \whp when $\frac{kd^2}{n}-\log(kd) \tends
+\infty.$   
\end{itemize}
\end{theorem}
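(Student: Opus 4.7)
The plan is to use Hall's theorem throughout, combining first- and second-moment methods on natural obstructions.

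Part (i) splits into two subcases. When $d$ is constant, I would observe
\[\E[e(H)] = |A|\,|B|\,\Pr(ab \in E(G)) = (kd)^2 \cdot \tfrac{d}{n} = \tfrac{k^2 d^3}{n} \tends 0,\]
so by Markov, $H$ is edgeless \whp, precluding any perfect matching. In the remaining subcase $kd^2/n - \log(kd) \tends -\infty$, let $X$ count vertices $a \in A$ with no $G$-neighbour in $B$. Using the configuration model (or direct hypergeometric counting), the probability that a given $a$ is isolated in $H$ is
\[\Pr(a \text{ isolated}) = \frac{\binom{kn-kd}{kd}}{\binom{kn}{kd}} = e^{-kd^2/n\,(1+o(1))},\]
so $\E[X] \sim kd \cdot e^{-kd^2/n} = e^{\log(kd) - kd^2/n} \tends \infty$. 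I would close with the second moment method: conditioning on a fixed $a$ being isolated removes only $kd$ of the $knd$ half-edges on the $Z$-side, so the pairwise correlation $\E[I_a I_{a'}]/(\E[I_a]\E[I_{a'}])$ is $1 + o(1)$ uniformly in $a \ne a'$, giving $\var(X) = o(\E[X]^2)$ and $X > 0$ \whp by Chebyshev. An isolated vertex is a trivial Hall obstruction.

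For part (ii) I would show that \whp every $S \subseteq A$ satisfies $|N_H(S)| \geq |S|$ and apply Hall's theorem. A Hall-violator $S \subseteq A$ with $|S| > kd/2$ yields a complementary violator $B \setminus N_H(S) \subseteq B$ of size at most $kd/2$, so it suffices to separately rule out $A$-side and $B$-side minimal violators of size at most $kd/2$; the two calculations are symmetric. The case $s = 1$ is the absence of isolated vertices, for which the first-moment bound $\E[X] \sim e^{\log(kd) - kd^2/n} \tends 0$ suffices under the current hypothesis. For $2 \leq s \leq kd/2$ I would take $S$ to be a \emph{minimal} violator, so $|N_H(S)| = s - 1$ and each $t \in N_H(S)$ has at least two $G$-neighbours in $S$ (else removing the unique such neighbour from $S$ would give a smaller violator). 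I then union-bound over pairs $(S, T)$ with $|S| = s$, $|T| = s - 1$, $N_H(S) \subseteq T$, using the configuration-model estimate
\[\Pr(N_H(S) \subseteq T) \leq \left(\frac{kn - kd + s - 1}{kn}\right)^{skd} \leq \exp\!\left(-\frac{s d(kd - s + 1)}{n}\right),\]
and aim to show the total sum over $s$ is $o(1)$.

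The principal obstacle is the Hall union bound for small $s \geq 2$. The naive enumeration $\binom{kd}{s}\binom{kd}{s-1} \leq (ekd/s)^{2s}$ combined with $\exp(-s kd^2/(2n))$ does not quite close under the threshold condition alone: it would require $kd^2/n \gg 2\log(kd)$ rather than the hypothesised $kd^2/n - \log(kd) \tends \infty$. Exploiting minimality should rescue this — since each $t \in N_H(S)$ absorbs at least two of $S$'s $skd$ outgoing edges, the event $\{N_H(S) \subseteq T\}$ imposes an extra structural constraint of order $(s/(kd))^{s-1}$, which effectively cancels one of the two enumeration factors $\binom{kd}{s}$ and brings the bound to the sharp threshold where it matches the $s = 1$ moment computation. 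For moderately large $s$ (say $s \geq \log(kd)$), the naive bound is already comfortably enough, so the delicate accounting is confined to small $s$.
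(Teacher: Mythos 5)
Your outline has the right skeleton---a second-moment argument on isolated vertices for (i), Hall's theorem with $S$/$T$-minimal violators for (ii), and a correct diagnosis that the naive union bound misses the threshold by roughly a factor $\log(kd)$ and must be rescued by the structural constraint that every vertex outside a minimal violator's neighbourhood absorbs at least two edges from $S$. This matches the paper's strategy closely. However, there is a genuine gap that runs through the whole argument: you repeatedly reach for the configuration model or an informal ``edges are nearly independent'' heuristic to justify the probability estimates, and that is precisely the tool the paper rules out. The theorem must cover $d$ up to roughly $\sqrt{n\log n}$, and the configuration model produces a simple graph only with probability about $\exp(-\Theta(d^2))$, so it stops being informative once $d$ grows past $O(1)$. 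The paper's substitute is a family of switching (double-counting) arguments, and these are the actual content of the proof, not a technicality you can wave past.

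Concretely, three of your steps need this missing machinery. First, in part (i) the claim that $\E[I_a I_{a'}]/(\E[I_a]\E[I_{a'}]) = 1 + o(1)$ (``removes only $kd$ of the $knd$ half-edges'') is a configuration-model heuristic; the paper instead proves outright negative correlation $\Pr(B^+_y \wedge B^+_{y'}) \leq \Pr(B^+_y)^2$ via a switching argument (Proposition~\ref{Second no edge from S to T}), from which $\var \leq \E$ follows and Chebyshev closes. Second, your bound $\Pr(N_H(S)\subseteq T) \leq \left(\frac{kn-kd+s-1}{kn}\right)^{skd}$ treats the neighbourhoods $\ga(a)$, $a\in S$, as i.i.d.\ uniform $kd$-subsets of $Z$; they are not, and justifying the inequality (which amounts to showing $\Pr(\Gamma(S)\cap T'=\emptyset) \leq \Pr(\gai(z)\cap S=\emptyset)^{|T'|}$) is exactly what Proposition~\ref{Second no edge from S to T} does by induction on $|T'|$, with the inductive step proved by counting switchings between two carefully defined subfamilies of $\G(k,n,d)$. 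Third, the extra gain from minimality---the assertion that each $t\in N_H(S)$ having two $G$-neighbours in $S$ costs about $(d/n)^2$ per such $t$---requires bounding conditional probabilities of the form $\Pr(\{s,s'\}\subseteq\gai(w_i)\mid E_0\wedge E_{i-1})$ in the dependent model, and the paper's Lemma~\ref{E_i given E_i-1} proves this again by switching. Your proposal correctly identifies what bounds are needed and why the naive one fails, but without replacing the independence heuristics by rigorous switching arguments the estimates at the heart of both parts remain unproved.
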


\begin{remark}
The second condition in conclusion $(i)$ has to be included because when $d$ is constant the quantity $\frac{kd^2}{n}-\log(kd)$ does not tend to $-\infty$.   
\end{remark}

Here and elsewhere, for any $y\in Y$ we define $\ga(y)=\{z\in Z : yz\in E(G)\}$ and for any $S\subseteq Y$, $\ga(S)=\cup_{y\in S} \ga(y).$ Similarly we define the inverse neighbourhood of $z\in Z$ by $\gai(z)$ and the inverse neighbouhood of $T\subseteq Z$ by $\gai(T)$.

The next result is a variation of Theorem~\ref{G[A,B]} when $B=\ga(y)$ for some $y\in A$.

\begin{theorem}\label{G[A,ga(y)]}
Let $k\in \Q^+,$ $n\in \Z^+ $ be arbitrarily large and $d\in \{2,\dots,n\}$ and suppose that $kn, kd\in \Z^+$ with $kd\leq n$. 

Furthermore let $Y$ and $Z$ be sets of size respectively $n$ and $kn$ and $G\sim G(k,n,d)$. Take a subset
$A\subseteq Y$ of size $kd$ and $y\in A$. Define $H:=G[A,\ga(y)]$ to be the subgraph induced by $G$ on vertex set $(A,\ga(y)).$  Then
\begin{itemize}
\item[(i)] No perfect matching exists in $H$ \whp when $\frac{kd^2}{n}-\log(kd) \tends
-\infty$ or when $d$ is a constant.
\item[(ii)] A perfect matching exists in $H$ \whp when $\frac{kd^2}{n}-\log(kd) \tends
+\infty.$   
\end{itemize}
\end{theorem}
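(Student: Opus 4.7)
The plan is to mimic the proof of Theorem~\ref{G[A,B]} after a symmetry reduction that exploits the special role of $y$. Since $d^+(y)=kd$, we have $|\ga(y)|=kd=|A|$, so both sides of $H$ have equal size; and the distinguished vertex $y\in A$ is adjacent to \emph{every} vertex of $B:=\ga(y)$, i.e.\ acts as a ``universal'' vertex on the $A$-side. By symmetry of $G(k,n,d)$ under permutations of $Z$, conditional on $\ga(y)=B$ the set $B$ may be regarded as a uniformly random $kd$-subset of $Z$, and the rest of the graph is a uniform random bipartite graph on $(Y\setminus\{y\},Z)$ in which each $Y$-vertex has degree $kd$ and each $Z$-vertex $z$ has degree $d-\mathbf{1}[z\in B]$.

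Because $y$ is joined to all of $B$, Hall's condition on $H$ reduces to checking $|\ga(S)\cap B|\geq |S|$ only for $S\subseteq A\setminus\{y\}$: any $S$ containing $y$ has $\ga_H(S)\supseteq B$, so the condition is automatic. Hence a perfect matching fails precisely when some such $S$ is Hall-bad, and the simplest obstruction is an $x\in A\setminus\{y\}$ with $\ga(x)\cap B=\emptyset$, i.e.\ an $A$-side isolated vertex of $H$.

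For part (i), let $X$ count such isolated vertices. An asymptotic calculation of the same type as the one used for Theorem~\ref{G[A,B]} shows that, conditional on $\ga(y)=B$, a fixed $x\neq y$ satisfies $\ga(x)\cap B=\emptyset$ with probability $(1+o(1))\,e^{-kd^2/n}$, whence
$$
\E[X]\;=\;(1+o(1))(kd-1)\,e^{-kd^2/n}\;=\;\exp\bigl(\log(kd)-kd^2/n+o(1)\bigr).
$$
When $kd^2/n-\log(kd)\to-\infty$ this diverges, and the standard second-moment computation (the events ``$x$ isolated'' and ``$x'$ isolated'' are asymptotically independent, with positive correlation of order $d/n$) gives $X\geq 1$ \whp. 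When $d$ is constant, each individual $x$ is isolated with probability tending to $1$ as $n\to\infty$, so $X=kd-1$ \whp. For part (ii), a first-moment argument modeled on that for Theorem~\ref{G[A,B]} bounds the expected number of Hall-bad $S\subseteq A\setminus\{y\}$ by a sum of the form
$$
\sum_{s=1}^{kd-1}\binom{kd-1}{s}\binom{kd}{s-1}\,\P\bigl(\ga(S)\cap B\subseteq T_s\bigr),
$$
where $T_s$ is a fixed $(s-1)$-subset of $B$. The dominant contributions come from $s$ bounded (isolated-vertex-type obstructions) and $s$ close to $kd-1$ (their ``dual'' on the $B$-side, controlled by the fact that only $y$ joins certain $z\in B$ to $A$); the hypothesis $kd^2/n-\log(kd)\to+\infty$ is precisely what makes both extremes tend to zero.

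The main obstacle I expect is that conditioning on $\ga(y)=B$ breaks exact biregularity on the $Z$-side, introducing mild dependencies between the neighbourhoods of the remaining $Y$-vertices. I anticipate handling this either via the configuration/switching machinery presumably used to prove Theorem~\ref{G[A,B]}, or by a direct enumeration exploiting the fact that the $Z$-side degrees differ from biregular only by one stub at each vertex of $B$. In either case the estimates should differ from those for Theorem~\ref{G[A,B]} only by multiplicative factors of the form $1+o(1)$, leaving the critical threshold $kd^2/n-\log(kd)$ unchanged.
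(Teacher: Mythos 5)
Your blueprint (reduce to the minimum degree of $H$, apply first and second moment, then a Hall-type union bound) is in the right spirit and matches the paper's overall strategy, but it has one genuine gap in part (ii) and a nontrivial complication in part (i) that the paper deliberately routes around.

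For part (i) you count $A$-side isolated vertices, i.e.\ $x\in A\setminus\{y\}$ with $\ga(x)\cap\ga(y)=\emptyset$. This works in principle, but it forces you to prove a two-sided estimate $\Pr(\ga(x)\cap\ga(y)=\emptyset)=(1+o(1))e^{-kd^2/n}$. The switching machinery here gives a clean \emph{upper} bound, $\left(\binom{n-2}{d-1}/\binom{n-1}{d-1}\right)^{kd}$, but the matching \emph{lower} bound $\binom{kn-kd}{kd}/\binom{kn}{kd}$ requires a separate switching argument (the paper notes this is provable but declines to carry it out). The paper instead counts $z\in\ga(y)$ with $\gai(z)\cap A=\{y\}$, because conditional on $y\in\gai(z)$ the set $\gai(z)\setminus\{y\}$ is a uniform $(d-1)$-subset of $Y\setminus\{y\}$ and the probability is the exact hypergeometric ratio $\binom{n-1-(kd-1)}{d-1}/\binom{n-1}{d-1}$ — no lower-bound switching needed. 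Your version is legitimate but strictly harder; you would need to carry out the lower-bound switching you currently only gesture at.

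The real gap is in part (ii). The naive union bound
$$
\sum_{s}\binom{kd-1}{s}\binom{kd}{s-1}\,\Pr\!\bigl(\ga(S)\cap T=\emptyset\bigr)
$$
does \emph{not} converge merely under $c:=kd^2/n-\log(kd)\to+\infty$. Your claim that this hypothesis ``is precisely what makes both extremes tend to zero'' is true of the endpoints $s=1$ and $s=kd-1$ themselves but false for the terms just inside. Take $s=2$: the number of pairs is $\Theta\!\left((kd)^3\right)$, the probability bound from the switching proposition is $\approx e^{-2kd^2/n}=e^{-2c}(kd)^{-2}$, so the contribution is $\Theta\!\left(kd\,e^{-2c}\right)$, which blows up whenever $c=o(\log(kd))$ (for instance $c=\sqrt{\log(kd)}$). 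The same happens symmetrically near $s=kd-1$. The union bound closes only under the much stronger hypothesis $c\geq 5\log(kd)$ (this is Proposition~\ref{Large c} in the paper). For the remaining range $0\leq c\leq 5\log(kd)$ the paper runs the Erd\H{o}s--R\'enyi refinement (Proposition~\ref{Min degree and matching}): first establish that $\delta^+(H)>0$ and $\delta^-(H)>1$ \whp, then restrict attention to $S$-minimal (respectively $T$-minimal) problematic pairs. Minimality forces every $w\in\ga(y)\setminus T$ to receive at least two edges from $S$, and these extra constraints supply, via a further switching lemma (Lemma~\ref{E_i given E_i-1}), a factor of roughly $\left(\binom{j}{2}(d/n)^2\right)^{j-1}$ that kills the near-extreme terms. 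This two-phase argument is the heart of the proof and is entirely absent from your proposal.
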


The case $d=1$ is not covered by Theorem~\ref{G[A,ga(y)]}. It
is nonetheless easy to check that for $d=1$ a matching exists if and only if $k=1$. 

To put our results in context we briefly describe what holds in the most standard model of random directed bipartite graphs.

\begin{definition}
Let $A$ and $B$ be two sets of
size $n$. A \emph{random bipartite graph} with parameters
$n,p$ is a bipartite graph on the vertex set $(A,B)$ where edges are chosen
independently of each other with probability $p.$ The model of random bipartite graphs is
denoted by $B(n,p)$. 
\end{definition}

The existence of perfect matchings in random bipartite graphs was investigated by Erd\H{o}s and
R\'enyi about fifty years ago. They established the following in \cite{Erdos-Renyi1964}.

\begin{theorem}[Erd\H{o}s--R\'enyi]\label{Erdos-Renyi}
Let $c$ be a constant and $n$ an arbitrarily large positive integer. Furthermore let 
$$ 
p = \frac{\log{n} + c}{n}\;
$$
and consider a random bipartite graph $G'\sim B(n,p)$.

Then the probability there is a perfect matching in $G'$ is asymptotically equal to
$$
\Pr(\mbox{There exists a perfect matching in $G'$}) = (1+o(1)) \exp(-2 e^{-c})\;.
$$
In particular if $ np-\log{n} \tends +\infty$ when $n\tends + \infty,$ then there exists a
matching in $G'$ \textit{whp}; and if $np-\log{n} \tends -\infty$ when $n\tends
+\infty,$ then no matching exists in $G'$ \textit{whp}.
\end{theorem}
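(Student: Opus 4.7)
The plan is the classical two-step argument: show the number of isolated vertices is asymptotically Poisson distributed, then invoke Hall's theorem to deduce that isolated vertices are essentially the only obstruction to a perfect matching.

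First, I would let $X$ count the vertices in $A\cup B$ that are isolated in $G'$. Linearity gives $\E[X]=2n(1-p)^n\to 2e^{-c}$ under the stated choice of $p$. To upgrade to distributional convergence, compute the descending factorial moments
$$\E[(X)_r]=\sum_{i+j=r}\binom{r}{i}(n)_i(n)_j(1-p)^{in+jn-ij},$$
where the $(i,j)$ summand corresponds to $i$ isolated vertices in $A$ and $j$ in $B$. Using $(n)_i\sim n^i$ and $(1-p)^{rn-ij}=(1+o(1))n^{-r}e^{-rc}$ for fixed $r$, each term tends to $\binom{r}{i}e^{-rc}$, so $\E[(X)_r]\to (2e^{-c})^r$. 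The method of moments identifies the limit of $X$ as $\mathrm{Poisson}(2e^{-c})$, and hence $\P(X=0)\to \exp(-2e^{-c})$.

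Since $X>0$ precludes a perfect matching, $\P(\text{PM exists})\leq \P(X=0)$. For the matching lower bound, apply Hall's theorem: if $G'$ has no perfect matching then some nonempty $S\subseteq A$ satisfies $|\ga(S)|<|S|$, and by symmetry between $A$ and $B$ we may assume such an $S$ exists with $s:=|S|\leq (n+1)/2$; chosen to be of minimal size, $|\ga(S)|=s-1$ and every vertex of $\ga(S)$ has at least two neighbours in $S$ (otherwise $S$ could be shrunk, contradicting minimality). The case $s=1$ recovers an isolated vertex in $A$.

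For $2\leq s\leq (n+1)/2$, these structural constraints (no edges from $S$ to the $n-s+1$ vertices outside $\ga(S)$; at least two ``forced'' edges from each of the $s-1$ vertices of $\ga(S)$ into $S$) combine via a union bound to
$$\P(\exists\,\text{such}\,S\,\text{of size}\,s)\leq 2\binom{n}{s}\binom{n}{s-1}\binom{s}{2}^{s-1}p^{2(s-1)}(1-p)^{s(n-s+1)},$$
with the factor $2$ absorbing the symmetric $B$-side contribution. Partitioning $[2,(n+1)/2]$ into ranges of $s$ and applying standard estimates ($\binom{n}{s}\leq(en/s)^s$, $1-p\leq e^{-p}$) shows the sum is $o(1)$, so $\P(\text{no PM})=\P(X=0)+o(1)$. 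The main obstacle is this final union bound: without the minimality constraint the naive estimate fails already at $s=2$ (where $\binom{n}{2}\binom{n}{n-1}(1-p)^{2(n-1)}\sim n$), so the extra $p^{2(s-1)}$ factor arising from the forced edges is essential, and one has to optimise carefully within each sub-range of $s$ to push the bound through.
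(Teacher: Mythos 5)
The paper does not prove this statement: it is quoted as a classical theorem of Erd\H{o}s and R\'enyi with a citation to their 1964 paper, so there is no in-paper proof to compare against. Your reconstruction is the standard two-step argument and it is essentially correct: the factorial-moment computation $\E[(X)_r]\to(2e^{-c})^r$ giving Poisson convergence of the isolated-vertex count is right (the exponent $in+jn-ij$ correctly accounts for the double-counted $S\times T$ edge block), and the structural observations about a minimal Hall violator ($|\ga(S)|=s-1$ and every vertex of $\ga(S)$ having at least two neighbours in $S$) are exactly what is needed to make the union bound converge, as you correctly diagnose at $s=2$. You also correctly note that the two families of constraints (no edges $S\to B\setminus\ga(S)$, forced edges $\ga(S)\to S$) involve disjoint edge slots and hence multiply. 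The only part left sketchy is the final verification that the sum over $2\le s\le (n+1)/2$ is $o(1)$, which is a routine but not trivial calculation requiring one to split the range of $s$; you flag this honestly. It is worth noting that the minimal-violator union bound you describe is precisely the template the authors of the present paper mirror in their Propositions~\ref{Large c} and~\ref{Min degree and matching}, where the added difficulty is that edge events are no longer independent and Proposition~\ref{No edge from S to T} together with switching arguments must substitute for the product formula you use here.
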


Theorem~\ref{G[A,B]} is an Erd\H{o}s--R\'enyi type result for the induced subgraph $H.$  To make the similarity between Theorem~\ref{G[A,B]} and Theorem~\ref{Erdos-Renyi} as clear as possible we set $k=1$ in the former. The induced subgraph $H$ is somewhat similar to a random bipartite graph as it has similar properties to $G'\sim B(d,d/n)$: The size of the stables of $H$ is $d$ and edges appear in ($G$ and hence also in) $H$ with uniform probability $d/n.$ The main difference is that edges do not appear independently in $H$, yet the dependence is generally speaking small. The similarity between $H$ and $G'$ is reflected by the fact that a perfect matching exists in both graphs \whp when $d^2/n - \log{d} \tends +\infty.$  

A related question that has been studied more extensively concerns not necessarily bipartite graphs. The models under consideration are $G_d(n)$ (graph chosen uniformly at random from all $d$-regular graphs on $n$ vertices) and $G(n,p)$ (graph on $n$ vertices where edges are chosen independently with probability $p$) where $p=d/n.$ 
Two kinds or results have been obtained. On the one hand properties of graphs that hold \whp in $G(n,p)$ have been shown to also hold \whp in $G_d(n)$ \cite{ksvw2001,ksv2002,ksv2007}. On the other hand Kim and Vu have studied the contiguity of both models in~\cite{kv2004}. They conjectured the two models are contiguous when $d\gg \log{n}$ (Sandwich conjecture), but were only able to show a slightly weaker relation between the models when $d\ll n^{1/3}/\log^2{n}$. If their result could be extended to $d$ in the $\sqrt{n \log{n}}$ range (and also to bipartite graphs) it would imply that the induced subgraph $H$ and $G'\sim B(d,d/n)$ are also contiguous, giving a straightforward proof of Theorem~\ref{G[A,B]} as a corollary of Theorem~\ref{Erdos-Renyi}.

The main motivation to study the existence of perfect matchings in induced subgraphs of random
biregular bipartite graphs has to do with commutative graphs and Pl\"unnecke's inequality. A
comprehensive study of the applications of commutative graphs and Pl\"unnecke's inequality can be
found in \cite{Ruzsa2009}. Here we only present the necessary facts that relate commutative graphs
with Theorem~\ref{G[A,ga(y)]}. We begin with the definition.  
\begin{definition}\label{commutative}
A directed layered graph $G$ with vertex set $X_0\cup X_1 \cup \dots \cup X_h$ is called
\emph{commutative} if 
\begin{enumerate}
\item There are edges only between consecutive layers, so that $E(X_i,X_j)=\emptyset$ unless $j=i+1$
for all $0\leq i,j \leq h.$
\item For all $1\leq i \leq h$ and $uv\in E(X_{i-1},X_i)$ there exists a perfect matching from a
subset of $\ga(u)$ to
$\ga(v).$ This condition is called \emph{Pl\"unnecke's upward condition}. 
\item For all $1\leq i \leq h$ and $uv\in E(X_{i},X_{i+1})$ there exists a perfect matching from a
subset of $\gai(v)$ to
$\gai(u).$ This condition is called \emph{Pl\"unnecke's downward condition}.
\end{enumerate}
\end{definition}
Observe that when $G$ is biregular the perfect matching in Pl\"unnecke's upward (downward) condition
is from the whole $\ga(u)$ to $\ga(v)$ ($\gai(v)$ to $\gai(u)$).  

Pl\"unnecke introduced commutative graphs to study the growth of sumsets \cite{Plunnecke1970,Ruzsa1989,Nathanson1996,Tao-Vu2006}. He was interested in the \emph{magnification ratios} of graphs.
$$
D_i(G) = \min_{\emptyset\neq Z \subseteq X_0} \frac{|\ga^{(i)}(Z)|}{|Z|}\;,
$$
where $1\leq i \leq h$ and $\ga^{(i)}(Z)$ is defined iteratively by $\ga^{(i)}(Z) =
\ga(\ga^{(i-1)}(Z))$. 
Pl\"unnecke proved a powerful inequality that limits the growth of magnification ratios of commutative graphs.
\begin{theorem}[Pl\"unnecke]\label{Plunnecke}
Let $G$ be a commutative graph. Then the sequence $D_i(G)^{1/i}$ is decreasing.
\end{theorem}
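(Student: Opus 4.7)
The plan is to deduce Theorem~\ref{Plunnecke} from a ``main Plünnecke inequality'': for any commutative graph $G'$ on $(h+1)$ layers, $D_h(G') \leq D_1(G')^h$. Once this is in hand, the monotonicity of $D_i^{1/i}$ follows by applying the main inequality to suitable sub-commutative-graphs obtained by subsampling layers of $G$.

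For the reduction, fix $1 \leq k < h$. The idea is to construct an auxiliary graph $G^{(k)}$ whose layers are $X_0, X_k, X_{2k}, \dots$, with an edge from $u \in X_{jk}$ to $v \in X_{(j+1)k}$ whenever some directed path $u \to v$ of length $k$ exists in $G$. I would verify that $G^{(k)}$ inherits a commutative structure from $G$ by composing the upward/downward matchings of $G$ along consecutive edges of such paths. Since $\ga^{(i)}_{G^{(k)}}(Z) = \ga^{(ik)}_G(Z)$ for $Z \subseteq X_0$, we have $D_i(G^{(k)}) = D_{ik}(G)$, and the main inequality applied to $G^{(k)}$ (on its first $\lfloor h/k\rfloor + 1$ layers) yields $D_{mk}(G) \leq D_k(G)^m$ for $mk \leq h$, i.e.\ $D_{mk}^{1/(mk)} \leq D_k^{1/k}$. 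This establishes the monotonicity along every divisibility chain; the case of arbitrary $i < j$ is then recovered by applying the divisibility argument at a common multiple of $i$ and $j$ on a commutative graph with sufficiently many layers.

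For the main inequality itself, I would induct on $h$. Take a nonempty set $Z \subseteq X_0$ minimising $|\ga^{(h)}(Z)|/|Z| = D_h$. Set up a vertex-capacitated flow network from $Z$ through the intermediate layers $X_1, \dots, X_{h-1}$ to $\ga^{(h)}(Z)$. Applying Menger's theorem (equivalently, max-flow min-cut), and refining the resulting cuts using the perfect matchings guaranteed by Plünnecke's upward and downward conditions, I would extract a nonempty subset $Z' \subseteq Z$ whose expansion $|\ga^{(h-1)}(Z')|/|Z'|$ is bounded by $D_h/D_1$. The inductive hypothesis applied to the sub-commutative-graph on layers $X_0, \dots, X_{h-1}$ then gives $|\ga^{(h-1)}(Z')| \leq D_1^{h-1}\,|Z'|$; combining the two bounds yields $D_h \leq D_1^h$.

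The main obstacle is the main inequality. A naive iteration of $|\ga(Z)| \geq D_1 |Z|$ fails because the extremal sets for $D_h$ and $D_1$ may be incomparable, so no single set can be tracked through all the layers. Menger's theorem together with both the upward \emph{and} downward Plünnecke conditions is precisely what allows one to reshuffle contributions at each layer into vertex-disjoint paths whose count is simultaneously controlled at both endpoints; this reshuffling is where the commutativity of $G$ enters essentially. The subsidiary verification that $G^{(k)}$ inherits commutativity, while more mechanical, also relies on the interplay of the upward and downward conditions propagated along length-$k$ paths.
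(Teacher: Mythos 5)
The paper does not prove Theorem~\ref{Plunnecke}; it is imported as a classical result (cited to Pl\"unnecke, Ruzsa, Nathanson, Tao--Vu), so there is no internal proof of this statement to compare against. Your sketch follows the standard graph-theoretic route in outline (Menger's theorem plus the power-graph construction), but as written it has two genuine gaps.

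The first gap is in the reduction from the main inequality $D_h \leq D_1^h$ to the full monotonicity. The power-graph construction $G^{(k)}$ yields only $D_{mk}^{1/(mk)} \leq D_k^{1/k}$, that is, comparisons between an index $k$ and its multiples. Passing to a common multiple $M$ of $i$ and $j$ gives $D_M^{1/M} \leq D_i^{1/i}$ and $D_M^{1/M} \leq D_j^{1/j}$; neither of these implies the desired $D_j^{1/j} \leq D_i^{1/i}$, and a common divisor fails for the same reason. Indeed the values $D_1 = 10$, $D_2 = 4$, $D_3 = 9$ satisfy every divisibility-chain constraint but violate $D_3^{1/3} \leq D_2^{1/2}$, so the divisibility information alone cannot yield the theorem. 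The standard repair is not a common-multiple argument but a product construction: one tensors $G$ with a suitably scaled ``inflation'' commutative graph (complete bipartite between consecutive layers of prescribed sizes) so that the resulting product has the index one wishes to compare against normalised to $1$, and then invokes the main inequality on that product.

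The second gap is in the inductive Menger step, where the inequalities point the wrong way. If Menger produces a nonempty $Z' \subseteq Z$ with $|\ga^{(h-1)}(Z')|/|Z'| \leq D_h/D_1$, then since $D_{h-1}$ is a minimum this only gives $D_{h-1} \leq D_h/D_1$, i.e.\ the lower bound $D_h \geq D_1 D_{h-1}$, not the upper bound you want. And the inductive hypothesis $D_{h-1}\le D_1^{h-1}$ asserts only that \emph{some} set achieves small $(h-1)$-step expansion; it does not bound $|\ga^{(h-1)}(Z')|$ for the particular $Z'$ that Menger hands you, so the step ``combining the two bounds yields $D_h \leq D_1^h$'' does not follow. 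The known proofs organise the Menger application differently (and typically first normalise to $D_1\le 1$ via the inflation product), rather than running an induction down the chain of layers in the way you describe.
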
   
In \cite{GPPlIn} it was shown that the upper bound for $D_i(G)\leq D_1(G)^i$ given by Theorem~\ref{Plunnecke} is sharp. In particular a commutative graph $G$ that satisfies $D_i(G)=k^i$ for all $1\leq i \leq h$ was
constructed for all $k\in \Q^+$ and $h\in \Z^+$. The extremal examples were biregular commutative graphs whose in and out degrees satisfied $d^+/d^-=k.$ In fact it is easy to check that, in any commutative graph whose degrees satisfy $d^+/d^-=k$, the sequence $D_i(G)^{1/i}$ is constant and equal to $k$. 

We apply Theorem~\ref{G[A,ga(y)]} to give a non-constructive, and probabilistic in nature, proof of
the existence of graphs that are extremal for Pl\"unnecke's inequality, answering a question of Gowers.

We form a layered directed biregular graph by ``placing random biregular bipartite directed graphs on
top of each other.'' This simple construction works when the out-degree is large enough compared to the size of the bottom layer and
the resulting graph is \whp commutative.

\begin{theorem}\label{theorem Commutative}
Let $1 \leq k\in \Q^+,$ $m\in \Z^+ $ be arbitrarily large, $d\in \{2,\dots,m\}$ and $h\in \Z^+$. Suppose that $km, kd\in
\Z^+.$  

Furthermore let $X_0, X_1, \dots , X_h$ be sets with $|X_i| = k^i m.$ For $1\leq i \leq h$ let
$G_i:=G_i[X_{i-1},X_i] \sim G(k,k^{i-1}m,d)$. 

Let $G$ be a graph with vertex set $V(G)=X_0\cup\dots\cup X_h$ and edge set $E(G)= \cup_{i=1}^h
E(G_i).$ Then

\begin{itemize}
\item[(i)] The graph $G$ is not commutative \whp when
$d\leq  \sqrt{\frac{1}{3} k^{h-2}m \log(k m)}.$
\item[(ii)] The graph $G$ is commutative \whp when
$d\geq 3 \sqrt{k^{h-2}m \log(h k^{h+1} m)}.$
\end{itemize}

\end{theorem}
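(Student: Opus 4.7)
The plan is to reduce both of Pl\"unnecke's conditions on a single edge of $G$ to an application of Theorem~\ref{G[A,ga(y)]}, then to run a union bound over edges for (ii) and to single out one edge for (i). Fix $uv\in E(G_i)$. Since every $G_j$ is biregular with $d^+/d^-=k$, one has $|\ga(u)|=|\ga(v)|=kd$, and by the remark following Definition~\ref{commutative} the upward condition at $uv$ is precisely the existence of a perfect matching in $G_{i+1}[\ga_{G_i}(u),\ga_{G_{i+1}}(v)]$: exactly the setting of Theorem~\ref{G[A,ga(y)]} applied to $G_{i+1}\sim G(k,k^im,d)$ with $A=\ga_{G_i}(u)$ (of size $kd$) and distinguished vertex $y=v\in A$. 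Crucially, $A$ depends only on $G_i$, which is independent of $G_{i+1}$, so after conditioning on $G_i$ the hypotheses of Theorem~\ref{G[A,ga(y)]} are met. For the downward condition at $uv\in E(X_i,X_{i+1})$ we make the symmetric move: reversing the orientation of $G_i$ yields a biregular bipartite graph $G_i^*\sim G(1/k,k^im,kd)$ (reversal is a bijection between the two biregular models and carries uniform to uniform), and the condition becomes a perfect matching in $G_i^*[\gai_{G_{i+1}}(v),\gai_{G_i}(u)]$: again Theorem~\ref{G[A,ga(y)]}, now with parameters $(k',n',d')=(1/k,k^im,kd)$, $A=\gai_{G_{i+1}}(v)$ of size $k'd'=d$, and $y=u\in A$.

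For part (ii): in both the upward and downward cases the worst layer is $i=h-1$, where the quantity $\frac{k'(d')^2}{n'}-\log(k'd')$ controlling Theorem~\ref{G[A,ga(y)]} equals $\frac{d^2}{k^{h-2}m}$ minus a term of order at most $\log(kd)$. The hypothesis $d\geq 3\sqrt{k^{h-2}m\log(hk^{h+1}m)}$ makes $\frac{d^2}{k^{h-2}m}\geq 9\log(hk^{h+1}m)$, dwarfing $\log(kd)$. Assuming a quantitative version of Theorem~\ref{G[A,ga(y)]}(ii)---extracted by tracking constants in its proof---that gives a per-instance failure probability polynomially small in $hk^{h+1}m$, a union bound over the $O(hk^hmd)$ upward and downward constraints concludes (ii).

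For part (i): it suffices to exhibit a single edge at which the upward condition fails. Take any $uv$ with $u\in X_{h-2}$ and $v\in X_{h-1}$; the upward condition at $uv$ is controlled by $G_h\sim G(k,k^{h-1}m,d)$. Under $d\leq \sqrt{\tfrac{1}{3}k^{h-2}m\log(km)}$, either $d$ is bounded---triggering the ``$d$ constant'' branch of Theorem~\ref{G[A,ga(y)]}(i)---or $d\to\infty$, in which case a short estimate (using $\frac{d^2}{k^{h-2}m}\leq \tfrac{1}{3}\log(km)$ together with $\log(kd)\to\infty$) yields $\frac{kd^2}{k^{h-1}m}-\log(kd)=\frac{d^2}{k^{h-2}m}-\log(kd)\to -\infty$; either way no matching exists \whp, so $G$ fails to be commutative \whp. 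The structural reduction is essentially bookkeeping; the real work is to extract from the proof of Theorem~\ref{G[A,ga(y)]} a polynomial tail bound tight enough to support the union bound in (ii), and to check that the factor of $3$ in the hypothesis yields enough slack to make the upward and reversed-model downward parameter checks hold simultaneously for every $1\leq i\leq h-1$.
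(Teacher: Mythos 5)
Your reduction of both Pl\"unnecke conditions to Theorem~\ref{G[A,ga(y)]} (upward directly, downward via reversal of orientation to $G(1/k,k^im,kd)$), singling out the top layer for part (i), and applying a union bound over edges for part (ii) is precisely the route the paper takes. The one thing you left as an ``assumption''---a quantitative tail bound strong enough for the union bound---is not something you need to re-derive by tracking constants: it is exactly Proposition~\ref{Large c}, whose hypothesis $c\geq 5\log(kd)$ is satisfied at every layer under $d\geq 3\sqrt{k^{h-2}m\log(hk^{h+1}m)}$, and which gives a per-edge failure probability $O\bigl(k^2d^2\exp(-kd^2/2n)\bigr)$ that is more than small enough; citing it would close the only gap in your write-up.
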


%
%

Observe that the upper bound and the lower bound in Theorem~\ref{theorem Commutative} have the same
asymptotic order. We make no effort to optimize the constants as our method does not lead to matching lower and upper bounds.

Results on random regular graphs are usually derived using the so-called \emph{configuration (or pairing) model} due to
Bollob\'as~\cite{Bollobas1980} (for a detailed presentation see \cite{Wormald1999}). However, this
model does not give meaningful results when the degree is large. McKay introduced in~\cite{McKay1981} a new way to approach problems in random regular graphs when
the degree is large, based on switching the edges of the graph. This method has been successfully
applied to extend the a lot of results for random regular graphs with large
degree~\cite{McKay-Wormald1991,ksvw2001,cfrr2002,cfr2002,ksv2002,ksv2007}. 

Our strategy is to mirror the proof of Theorem~\ref{Erdos-Renyi} of Erd\H{o}s and R\'enyi. The biggest obstacle is dealing with
dependencies among the edges. We do this by repeatedly using three ingredients: the regularity of the degrees, the symmetry of $\G(k,n,d)$
and the idea of edge switchings.

The existing estimates on the number of biregular bipartite graphs contain error terms, which are
negligible when $d$ is small compared to $n$, but become significant for larger $d$. We will not
need to estimate $|\G(k,n,d)|$ and so we will not be affected by this.

The paper is organised as follows. In the next section we introduce the methods we will use repeatedly throughout the paper. In Section~\ref{Preliminary} we
prove a useful result, whose proof demonstrates how the lack of independence in choosing the edges
can be overcome. In Section~\ref{Thm 3} we prove Theorem~\ref{G[A,ga(y)]} and in Section~\ref{Thm
2} we present the backbone of the proof of Theorem~\ref{G[A,B]}. Finally in
Section~\ref{Commutative} we prove Theorem~\ref{theorem Commutative}. 

\begin{acknowledgement}
Both authors would like to thank Oriol Serra for his help and support which were instrumental for the completion of the project. The second author would like to thank Tim Gowers for suggesting using random graphs to construct extremal examples for Pl\"unnecke's inequality and for sharing his insight. He would also like to thank Ben Green and Peter Keevash for helpful suggestions.
\end{acknowledgement}

\textbf{Notation.} We conclude the introduction with a quick recap of standard notation we will use
throughout the paper. Then \emph{out-degree} of a vertex $v$ is $d^+(v) = |\ga(v)|$ and the
\emph{minimum out-degree} of a directed graph $G$ is $\delta^+(G) = \min\{d^+(v) : v\in V(G)\}.$ The
\emph{in-degree} of a vertex $v$ is similarly defined by $d^-(v)=|\gai(v)|$ and so is the \emph{minimum
in-degree} $\delta^-(G)$ of a directed graph $G$. The \emph{minimum degree} of a directed graph $G$
is $\delta(G) = \min\{\delta^-(G), \delta^+(G)\}.$

For a two functions $f, g$ we write $f(n) = O(g(n))$ if $|f(n)|\leq C |g(n)|$ for some absolute
constant $C$ and $f(n)= \Theta(g(n))$ if $f(n) = O(g(n))$ and $g(n) = O(f(n)).$ We also write
$f(n) = o(g(n))$ to mean that $\lim_{n\tends +\infty} f(n)/g(n) =0.$ In particular we use
$f(n)=o(1)$ if $\lim_{n\tends +\infty} f(n) = 0.$


\section{Symmetry and switching in biregular bipartite graphs} \label{Symmetry and switching}


The first result we prove illustrates how the regularity of the degrees and the symmetry of biregular bipartite graphs will be used in the paper.

\begin{lemma}\label{Expectation of intersection}
Let $k,n,d$ be like in the statement of Theorem~\ref{G[A,B]}. Suppose that $G\sim G(k,n,d).$

(i) Let $y,y'\in Y.$ Then
$$
\E(|\ga(y)\cap \ga(y')|) = \frac{k d (d-1)}{n-1}\;.
$$
(ii) Let $y\in Y$ and $B\subseteq Z.$ Then  
$$
\E(|\ga(y)\cap B|) = \frac{d |B|}{n}\;.
$$
\end{lemma}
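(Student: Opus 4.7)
The plan is to exploit the two natural symmetries of the uniform measure on $\G(k,n,d)$: the model is invariant under arbitrary permutations of the labels of $Y$ and under arbitrary permutations of the labels of $Z$. These symmetries, combined with biregularity, pin down the marginal distribution of $\gai(z)$ for each fixed $z\in Z$: since $|\gai(z)|=d$ deterministically and the law of $\gai(z)$ is invariant under permutations of $Y$, the set $\gai(z)$ is uniformly distributed over all $d$-element subsets of $Y$. This is the only ingredient I need.

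For part (ii), I would write
\[
|\ga(y)\cap B| \;=\; \sum_{z\in B} \mathbf{1}[yz\in E(G)] \;=\; \sum_{z\in B}\mathbf{1}[y\in \gai(z)].
\]
Taking expectations and applying the marginal observation above, each summand contributes $\Pr(y\in \gai(z))=d/n$. Linearity of expectation then yields $\E(|\ga(y)\cap B|)=|B|d/n$, which is the claim.

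For part (i), the same indicator trick gives
\[
|\ga(y)\cap \ga(y')| \;=\; \sum_{z\in Z}\mathbf{1}[yz\in E(G)]\mathbf{1}[y'z\in E(G)] \;=\; \sum_{z\in Z}\mathbf{1}\bigl[\{y,y'\}\subseteq \gai(z)\bigr].
\]
Since $y\neq y'$ and $\gai(z)$ is uniform on the $d$-subsets of $Y$, each summand contributes
\[
\Pr\bigl(\{y,y'\}\subseteq\gai(z)\bigr)=\frac{\binom{n-2}{d-2}}{\binom{n}{d}}=\frac{d(d-1)}{n(n-1)}.
\]
Summing over the $|Z|=kn$ choices of $z$ gives $\E(|\ga(y)\cap\ga(y')|)=\frac{kd(d-1)}{n-1}$, as required.

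There is no real obstacle here: once one convinces oneself that the $Y$-permutation symmetry of the uniform measure on $\G(k,n,d)$ forces the single-vertex preimage $\gai(z)$ to be a uniformly random $d$-subset of $Y$, both parts collapse into a one-line linearity-of-expectation calculation. The only point worth double-checking is that the conclusion for (i) really uses nothing about the joint distribution of $\gai(z)$ and $\gai(z')$ for different $z,z'$; linearity handles this automatically, which is precisely why the lemma is a warm-up for dealing with dependencies later in the paper.
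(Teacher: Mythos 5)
Your proof is correct. Both arguments hinge on the same symmetry principle, but you package it differently for part (i), and your route is the more economical one. The paper first notes that $\ga(y)$ is uniform over $kd$-subsets of $Z$, conditions on $\ga(y)=S$, and writes $\E(|\ga(y)\cap\ga(y')|)=\sum_{z\in S}\Pr(z\in\ga(y'))$; it then needs two more symmetry arguments — a bijection showing the terms are equal across $z\in S$, and a double-count $d-1=\sum_{v\neq y}\Pr(z_0\in\ga(v))=(n-1)\Pr(z_0\in\ga(y'))$ — to evaluate the common value. You instead observe once and for all that $Y$-exchangeability forces the marginal law of $\gai(z)$ to be uniform on $d$-subsets of $Y$, expand $|\ga(y)\cap\ga(y')|=\sum_{z\in Z}\mathbf{1}[\{y,y'\}\subseteq\gai(z)]$, and each term is the hypergeometric probability $\binom{n-2}{d-2}/\binom{n}{d}=d(d-1)/(n(n-1))$; summing over $|Z|=kn$ terms closes the computation with no conditioning. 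For part (ii) the two arguments coincide (your indicator $\mathbf{1}[y\in\gai(z)]$ is the same as the paper's $\mathbf{1}[z\in\ga(y)]$). Your route buys a shorter, conditioning-free proof that makes visible exactly which marginal fact is being used, while the paper's version foreshadows the conditional/bijection style it will escalate in the switching arguments that follow; both are sound.
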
 
\begin{proof}
First note that $\ga(y)$ is chosen uniformly  at random from all $(kd)$-element subsets of $Z.$ Without loss of generality we can therefore assume that it is fixed and equal to a set $S$. Next we observe that  
\begin{eqnarray*}
\E(|\ga(y)\cap \ga(y')|)  =  \sum_{z\in S} \Pr(z\in \ga(y')) \;.
\end{eqnarray*}
The probability $\Pr(z\in \ga(y'))$ is equal for all $z\in S.$ To see why take $z_0, z_1\in S$ and observe that there exists a bijection $\theta$ from
$$
\G_{z_0} =\{G\in \G{(k,n,d)} : \ga(y)= S \wedge y'z_0\in E(G) \}
$$
to
$$
\G_{z_1} =\{G\in \G{(k,n,d)} : \ga(y)= S  \wedge y'z_1\in E(G) \}\;.
$$
The bijection $\theta$ maps $z_0$ to $z_1$ and vice versa and restricts to the identity on
$V(G)\setminus \{z_0, z_1\}$. So 
\begin{eqnarray*}
\E(|\ga(y)\cap \ga(y')|)  =kd  \Pr(z_0\in \ga(y'))\;. 
\end{eqnarray*}
On the other hand
\begin{eqnarray*}
d-1 & = & \E(|\gai(z_0) \setminus \{y\}|) \\
       & = & \sum_{v\in Y\setminus \{y\}} \Pr(z_0\in \ga(v))\\
       & = & (n-1) \Pr(z_0\in \ga(y'))\;.
\end{eqnarray*}
The third identity following from the symmetry of random biregular bipartite graphs. The first conclusion follows. The second can be proved similarly.
\end{proof} 

The arguments in the above proof are not sufficient when dealing with more complicated events. To deal with such events we will employ elementary counting arguments that involve switchings.   

\begin{definition}\label{Switching}
Let  $a,b\in Y$ and $c,d\in Z$ such that $ac,bd\in E(G)$ and $ad,bc\notin E(G).$ The
\emph{$\{ac,bd\}$-switching} of $G$ is the graph $H$ with the same set of vertices as $G$ and
$E(H)=E(G)\cup \{ad,bc\}\setminus\{ac,bd\}$. 
\end{definition}
Figure~\ref{fig:sw1} offers an illustration of this natural operation. Observe that if $G$ is
biregular bipartite, then so is $H$; and that if $H$ is the $\{ac,bd\}$-switching of $G$, then $G$
is the $\{ad,bc\}$-switching of $H$.
\begin{figure}[ht]
 \begin{center}
 \includegraphics[width=0.7\textwidth]{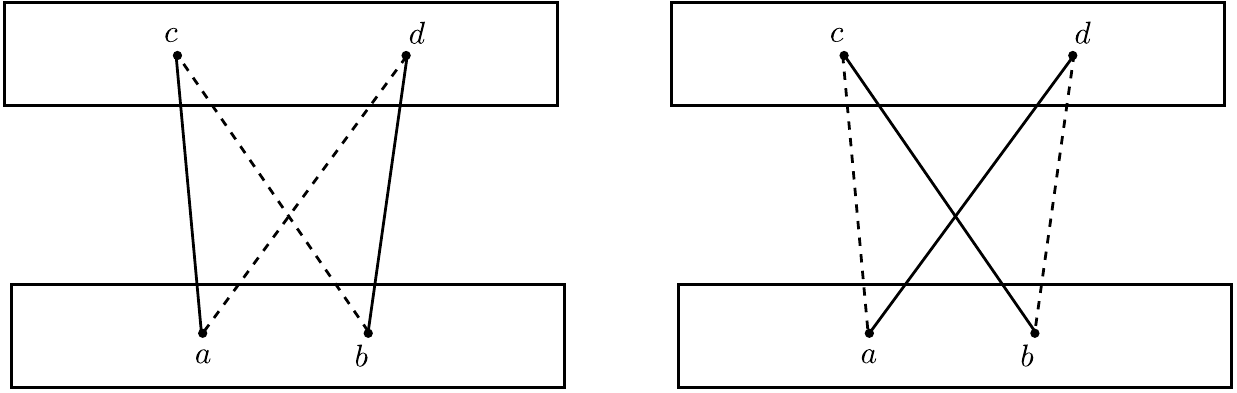}
 \end{center}
 \caption{A graph $G$ and its $\{ac,bd\}$-switching $H$. Solid lines represent edges and dashed
lines missing edges.}
 \label{fig:sw1}
\end{figure}
Switchings between graphs were first used by McKay in~\cite{McKay1981} to obtain bounds on the probability that a
fixed graph appears as a subgraph of a random regular graph. 
McKay~\cite{McKay1985} used the same technique to extend the range of $d$ in the enumeration of
regular graphs to $d=o(n^{1/3})$. McKay and Wormald in~\cite{McKay-Wormald1991} improved that range
to $d=o(\sqrt{n})$ by introducing a new type of switching. Switching is moreover
useful in proving that \whp regular graphs are expanders~\cite{Friedman1991} or in counting the
number of spanning trees subject to an asymptotic condition on the number of cycles~\cite{McKay1983}.

As mentioned in the introduction switching has more recently been used to study various properties of random regular graphs \cite{ksvw2001,cfrr2002,cfr2002,ksv2002,ksv2007}. We will use it in a similar fashion to compare the sizes of two families of biregular bipartite graphs, say $\G_1$ and $\G_2.$ We will do this counting in two ways the number of switchings between the two families. In other words we will double count the number of ordered pairs $(G_1,G_2) \in \G_1 \times \G_2$ where $G_1$ is a switching of $G_2,$ which is equivalent to $G_2$ being a switching of $G_1.$  

\section{Preliminary results} \label{Preliminary}

The key to most of the calculations leading to the proof of Theorem~\ref{G[A,ga(y)]} is having a
good upper bound on the probability that there are no edges between two sets $S\subseteq A$ and
$T\subseteq \ga(y)$, for some $y\in A.$ As $y$ is joined to all vertices in $\ga(y)$ we assume that
$S\subseteq
A\setminus\{y\}.$ The main result of this section is the following. 

\begin{proposition} \label{No edge from S to T}
Let $Y,Z, G $ and $y$ be like in the statement of Theorem~\ref{G[A,ga(y)]}. Suppose that $T\subseteq
\Gamma(y), z_1\in T$ and $S\subseteq Y\setminus\{y\}$, where $|S|+d\leq n$. Then the probability
that there are no edges from $S$ to $T$ is bounded above by:
\begin{eqnarray*}
\Pr(\Gamma(S)\cap T = \emptyset) &\leq& \Pr(\gai(z_1)\cap S = \emptyset)^{|T|} \\
						      &  =   & \left(1-\frac{|S|}{n-1}\right)^{|T|}
\left(1-\frac{|S|}{n-2}\right)^{|T|} \dots \left(1-\frac{|S|}{n-d+1}\right)^{|T|} \\
						      &\leq&
\left(1-\frac{d-1}{n-1}\right)^{|S|\,|T|} \\
						      &\leq& (1+o(1))\, \exp{\left(-
\frac{d\,|S|\,|T|}{n}\right)}\;.
\end{eqnarray*}
\end{proposition}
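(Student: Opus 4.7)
The plan is to prove the four claims of Proposition~\ref{No edge from S to T} from top to bottom. The middle equality is the easiest: by the symmetry arguments used in Lemma~\ref{Expectation of intersection}, conditional on $z_1\in\Gamma(y)$ (which holds by hypothesis), the set $\Gamma^{-1}(z_1)\setminus\{y\}$ is uniformly distributed over $(d-1)$-subsets of $Y\setminus\{y\}$. Sequential conditioning on its elements chosen without replacement yields $\prod_{i=1}^{d-1}(1-|S|/(n-i))$ immediately.

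The first inequality is the heart of the proposition: it is a negative correlation statement for the events $E_i=\{z_i\notin\Gamma(S)\}$, whose individual probabilities all coincide with $\Pr(\Gamma^{-1}(z_1)\cap S=\emptyset)$ by the symmetry of $\mathcal{G}(k,n,d)$. I would prove it by induction on $t=|T|$, with the base case $t=1$ being the identity just noted. For the inductive step, write $T=T'\cup\{z_t\}$ and use
\[
\Pr(\Gamma(S)\cap T=\emptyset)=\Pr(\Gamma(S)\cap T'=\emptyset)\cdot\Pr\!\bigl(z_t\notin\Gamma(S)\,\big|\,\Gamma(S)\cap T'=\emptyset\bigr).
\]
The inductive hypothesis controls the first factor, so it suffices to show that the conditional probability in the second factor is at most the unconditional probability $\Pr(z_t\notin\Gamma(S))$. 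Intuitively this is clear: forbidding edges from $S$ to $T'$ pushes the out-edges of $S$ into $Z\setminus T'$, making it harder for $z_t$ to avoid a neighbour in $S$.

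To turn this intuition into a proof, compare the sizes of
\[
\mathcal{A}=\{G: T\subseteq\Gamma(y),\,\Gamma(S)\cap T=\emptyset\},\qquad
\mathcal{B}=\{G:T\subseteq\Gamma(y),\,\Gamma(S)\cap T'=\emptyset,\,z_t\in\Gamma(S)\}
\]
by double counting switchings. Given $G\in\mathcal{A}$, choose $s\in S$, $y_0\in\Gamma^{-1}(z_t)\setminus\{y\}$ (automatically $y_0\notin S$), and $x\in\Gamma(s)\setminus\Gamma(y_0)$, then apply the $\{sx,y_0z_t\}$-switching of Definition~\ref{Switching}. The resulting graph lies in $\mathcal{B}$: the new edges $sz_t$ and $y_0 x$ create no $S$--$T'$ edge (as $y_0\notin S$ and $z_t\notin T'$), and $y$'s neighbourhood is untouched, so $T\subseteq\Gamma(y)$ is preserved. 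Counting forward and reverse switchings and taking the ratio yields the desired bound $\Pr(z_t\notin\Gamma(S)\mid\Gamma(S)\cap T'=\emptyset)\le\Pr(z_t\notin\Gamma(S))$, which closes the induction.

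The last two displayed lines are routine: the product $\binom{n-1-|S|}{d-1}/\binom{n-1}{d-1}=\binom{n-d}{|S|}/\binom{n-1}{|S|}$ is bounded above by $(1-(d-1)/(n-1))^{|S|}$ factor by factor, and $1-x\le e^{-x}$ combined with $(d-1)/(n-1)=(1+o(1))d/n$ gives the final asymptotic form. The main obstacle is the switching step: the forward switching count from $G\in\mathcal{A}$ involves the graph-dependent quantity $|\Gamma(s)\setminus\Gamma(y_0)|$, and similarly for the reverse count from $H\in\mathcal{B}$, so a naive pointwise comparison does not immediately give a uniform ratio. One should therefore count pairs $(G,H)$ globally and exploit the symmetry of $\mathcal{G}(k,n,d)$ to average these graph-dependent factors, extracting a clean inequality without tracking individual intersection sizes.
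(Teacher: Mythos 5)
Your high-level structure matches the paper's: you set up the induction on $|T|$, correctly reduce the inductive step to the correlation inequality $\Pr\bigl(z_t\notin\Gamma(S)\mid\Gamma(S)\cap T'=\emptyset\bigr)\le\Pr\bigl(z_t\notin\Gamma(S)\bigr)$, and identify switching as the right tool. But the step you defer is exactly where the difficulty lies, and the hand-wave at the end (``count pairs globally and exploit the symmetry\ldots to average these graph-dependent factors'') is not a proof plan. The problem is quantitative: you want $|\mathcal{A}|/(|\mathcal{A}|+|\mathcal{B}|)\le q$, where $q=\binom{n-1-|S|}{d-1}/\binom{n-1}{d-1}$ is itself a product of $d-1$ separate factors. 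A single round of switching between $\mathcal{A}$ and $\mathcal{B}$ naturally produces a bound of the form $a\,|\mathcal{A}|\le b\,|\mathcal{B}|$ for crude combinatorial constants $a$ and $b$; there is no mechanism in your proposal that makes the resulting ratio equal (or bounded by) $q/(1-q)$, and the graph-dependence of $|\Gamma(s)\setminus\Gamma(y_0)|$ cannot simply be averaged away without a concrete device. You also choose $\mathcal{B}$ too loosely: graphs in which $z_t$ has two or more neighbours in $S$ have no single switching back to $\mathcal{A}$, so the backward count is not structured the way you imply.

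The paper handles this by a genuinely finer decomposition that your plan is missing. First, it reveals the edges into $z_t$ one at a time, writing $\gai(z_t)=\{y_1,\dots,y_d\}$ with $y_1=y$ and $F_j=\{y_1,\dots,y_j\}$, and proves the per-step bound $\Pr\bigl(y_{j+1}\notin S\mid (F_j\cup\gai(T'))\cap S=\emptyset\bigr)\le 1-|S|/(n-j)$; this is what reproduces the product $\prod_{j=1}^{d-1}(1-|S|/(n-j))$ exactly. Second, to obtain each per-step bound it compares $\Pr(y_{j+1}=v)$ to $\Pr(y_{j+1}=u)$ for $u\in S$ and $v\notin S$, conditioned on $F_j=J$ and $\gai(T')=W$; the case $v\in W\setminus J$ requires a switching argument, and to get a clean pointwise ratio the families $\G_v$ and $\G_u$ are further partitioned by $i=|\Gamma(u)\cap\Gamma(v)|$, yielding $(kd-i)|\G_{v,i}|\le(kd-1-i)|\G_{u,i}|$ for each $i$. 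That second layer of partitioning by intersection size is precisely the device that resolves the ``graph-dependent quantity'' issue you flagged, and your proposal contains no analogue of it. As written, the argument has a genuine gap at the crucial correlation inequality.
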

Before giving the proof we quickly present a heuristic explanation for the crucial first inequality. For simplicity we take $T=\{z_1,z_2\}$. Suppose for a moment that the neighbourhoods of vertices
in $S$ were chosen independently. Then we would have 
$$
\Pr(\Gamma(S)\cap T = \emptyset) = \Pr(\gai(z_1)\cap S = \emptyset)  \Pr(\gai(z_2)\cap S =
\emptyset) =  \Pr(\gai(z_1)\cap S = \emptyset)^2 \;.
$$  
As we do not have independence we have to instead use conditional probabilities:
$$
\Pr(\Gamma(S)\cap T = \emptyset) = \Pr(\gai(z_1)\cap S = \emptyset)  \Pr(\gai(z_2)\cap S = \emptyset
\mid \gai(z_1)\cap S = \emptyset)\;.
$$  
Conditioning on the event $\gai(z_1)\subseteq Y\setminus S$ has an effect on $ Y\setminus S:$ the
vertices in $\gai(z_1)$ have one of the $kd$ edges coming out of them ``taken up'' by $z_1.$ One
expects that this makes $\gai(z_2)$ less likely to include them and consequently that 
$$
\Pr(\gai(z_2)\cap S = \emptyset \mid \gai(z_1)\cap S = \emptyset) \leq \Pr(\gai(z_1)\cap S =
\emptyset) \;.
$$  

Proving this type of upper bound for conditional probabilities is the main task lying ahead. 

\begin{proof}[Proof of Proposition~\ref{No edge from S to T}]
The second inequality can be proved by induction on $d$ and the third is standard, so we only prove
the first inequality and the expression for $ \Pr(\gai(z_1)\cap S = \emptyset).$ We let $s=|S|$, 
$T=\{z_1,\dots,z_t\}$ and proceed by induction on $t.$ 

When $t=1$ we have
 \begin{eqnarray}\label{ground case for e(S,T)=0}
 \Pr(\gai(z_1) \cap S = \emptyset ) = \frac{\binom{n-1-s}{d-1}}{\binom{n-1}{d-1}} = \left(1 -
\frac{s}{n-1}\right)  \left(1 - \frac{s}{n-2}\right) \dots  \left(1 - \frac{s}{n-d+1}\right)\;,
 \end{eqnarray}
as $\gai(z_1)\setminus \{y\}$ is uniformly distributed over all $(d-1)$-element subsets of
$Y\setminus \{y\}.$ Another way to interpret this identity is by ordering the edges coming in $z_1.$
Without loss of generality we can assume that the first edge is $yz_1.$ The probability the second
edge coming in $z_1$ does not originate from $S$ is $1-s/(n-1).$ The probability the third edge
coming in $z_1$ does not originate from $S$, given that the second does not, is $1-s/(n-2)$ and so
on. 

For the inductive step let us write $T'=\{z_1,\dots, z_{t-1}\}.$ As 
$$
\Pr(\gai(T) \cap S = \emptyset) =  \Pr(\gai(T') \cap S = \emptyset) \, \Pr(\gai(z_t) \cap S =
\emptyset \mid \gai(T') \cap S = \emptyset) \;,
$$
it is enough for our purpose to establish that
\begin{eqnarray} \label{inductive step for e(S,T)=0}
\Pr(\gai(z_t) \cap S = \emptyset \mid \gai(T') \cap S = \emptyset)  &\leq& \Pr(\gai(z_t) \cap S =
\emptyset) \\
&  =   & \Pr(\gai(z_1) \cap S = \emptyset)\;. \nonumber
\end{eqnarray}
The last equality follows from the symmetry properties of biregular bipartite graphs. The remainder
of the proof is dedicated to proving \eqref{inductive step for e(S,T)=0}. The
strategy is to order the edges ending in $z_t$ and successively estimate the probability that each
does not originate from $S.$ This will be done in a number of lemmata. 

We need to keep track of the first $j$ edges ending in $z_t.$ To achieve this we denote by
$y=y_1,\dots,y_d$ the elements of $\gai(z_t)$ and, for $1\leq j \leq d,$ we set $F_j=
\{y_1,\dots, y_j\}.$  

The key is to prove the following intuitively clear observation. Suppose that $\gai(T')$ and $F_j$ are disjoint from $S$. Then for any $u\in S$ and any $v\in Y\setminus F_j$ the probability that $y_{j+1}=u$ is no smaller than the probability that $y_{j+1}=v$. We prove the statement in a number of steps. Initially we condition on $F_j$
and $\gai(T')$.

\begin{lemma}\label{u vs notin gai(T')}
Let  $1\leq j\leq d-1$ be an integer and $u\in S.$ Suppose $J\subseteq Y\setminus S$ is a set of
size $j$ that contains $y$, $W\subset Y\setminus S$ is another subset of $Y$ that is disjoint from
$S$ and $v\notin W\cup J.$ Then
$$
\Pr(y_{j+1} =v \mid F_j =J \wedge \gai(T')=W)=\Pr(y_{j+1} = u \mid F_j =J \wedge \gai(T') = W)\;.
$$
\end{lemma}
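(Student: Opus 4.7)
The plan is to deduce Lemma~\ref{u vs notin gai(T')} from the symmetry of $\G(k,n,d)$ under relabelling of $Y$. Let $\theta$ be the involution on $Y$ that transposes $u$ and $v$ and fixes every other vertex; acting trivially on $Z$, it preserves the bipartition and every vertex degree and hence induces a measure-preserving involution of $\G(k,n,d)$. The strategy is to exhibit, via $\theta$, a bijection between graphs realising $\{F_j=J,\,\gai(T')=W,\,y_{j+1}=u\}$ and graphs realising $\{F_j=J,\,\gai(T')=W,\,y_{j+1}=v\}$; together with the $\theta$-invariance of the uniform measure this gives equality of the two joint probabilities, from which, after dividing by $\Pr(F_j=J \wedge \gai(T')=W)$, the claimed identity of conditional probabilities follows.

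Checking that $\theta$ carries the right events correctly uses the hypotheses on $u,v,J,W$ in a direct way. Both $u\notin W$ (since $u\in S$ and $W\cap S=\emptyset$) and $v\notin W$ (by hypothesis), so $\theta$ fixes $W$ setwise and the event $\gai(T')=W$ is preserved; an identical pair of facts for $J$ shows that $\theta$ fixes $J$ setwise and preserves $F_j=J$. The remaining check is that $\theta$ swaps $\{y_{j+1}=u\}$ with $\{y_{j+1}=v\}$, which is immediate when $v\notin\gai(z_t)$, since then $\theta$ simply exchanges $u\in\gai(z_t)$ with $v$, but requires more care when $v\in\gai(z_t)$, because then $\theta$ permutes two positions in the listing of $\gai(z_t)$ and one has to verify that position $j+1$ indeed receives $v$.

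To handle both subcases uniformly I plan to work in the enlarged probability space obtained by sampling, conditional on the underlying graph, a uniformly random ordering $y_1,\dots,y_d$ of $\gai(z_t)$ with $y_1=y$. In this model the joint law of $(\gai(z_t),\gai(T'),\text{ordering})$ is manifestly $\theta$-equivariant, so the third check reduces to the observation that $\theta$ acts on the random ordering by the transposition $(u\ v)$, sending position $j+1$ from $u$ to $v$. This mirrors the symmetry argument already used in Lemma~\ref{Expectation of intersection} and is the natural probabilistic counterpart of the heuristic preceding Proposition~\ref{No edge from S to T}. The main obstacle is precisely this setup of the ordering so that the bijection is well-behaved in the case $v\in\gai(z_t)$; once the random-ordering viewpoint is in place, the remaining steps are routine.
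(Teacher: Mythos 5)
Your proposal is correct and takes essentially the same approach as the paper, which dispatches the lemma in one sentence by invoking the symmetry of $\G(k,n,d)$ under swapping $u$ and $v$. The extra care you devote to the random-ordering viewpoint (to handle the subcase $v\in\gai(z_t)$) is a sound and slightly more explicit version of what the paper leaves implicit in its setup of $y_1,\dots,y_d$.
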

\begin{proof}
The statement follows from the symmetry properties of random biregular bipartite graphs:
interchanging $u$ and
$v$ does not affect the events $\{\gai(T') = W\}$ nor $\{F_j=J\}.$  
\end{proof}

\begin{lemma}\label{u vs in gai(T')}
Let  $1\leq j\leq d-1$ be an integer and $u\in S.$ Suppose $J\subseteq Y\setminus S$ is a set of
size $j$ that contains $y$, $W\subset Y\setminus S$ is another subset of $Y$ that is disjoint from $S$
and $v\in W\setminus J.$ Then
$$
\Pr(y_{j+1} =v \mid F_j =J \wedge \gai(T') = W) \leq \Pr(y_{j+1} = u \mid F_j =J \wedge
\gai(T') = W)\;.
$$
\end{lemma}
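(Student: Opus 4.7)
The plan is a switching argument. First, I will exploit the fact that, conditional on the graph $G$, the ordering $y_2,\dots,y_d$ of $\gai(z_t)\setminus\{y\}$ is a uniformly random permutation independent of $G$. A short calculation shows that $\Pr(F_j = J \wedge y_{j+1} = v)$ equals the combinatorial factor $(j-1)!(d-j-1)!/(d-1)!$ times the ``unordered'' probability $\Pr(J\cup\{v\}\subseteq \gai(z_t))$, and similarly for $u$, so the required inequality reduces to
\begin{equation*}
\Pr\bigl(J\cup\{v\}\subseteq \gai(z_t) \bigm| \gai(T')=W\bigr) \;\leq\; \Pr\bigl(J\cup\{u\}\subseteq \gai(z_t)\bigm| \gai(T')=W\bigr).
\end{equation*}
Splitting each side according to whether the ``other'' vertex ($u$ or $v$) is also in $\gai(z_t)$, the ``both in $\gai(z_t)$'' contributions are identical on the two sides, so it suffices to show $|\A_v|\leq |\A_u|$ where
\begin{align*}
\A_v &:= \{G\in \G(k,n,d) : v\in\gai(z_t),\ u\notin\gai(z_t),\ J\subseteq\gai(z_t),\ \gai(T')=W\},\\
\A_u &:= \{G\in \G(k,n,d) : u\in\gai(z_t),\ v\notin\gai(z_t),\ J\subseteq\gai(z_t),\ \gai(T')=W\}.
\end{align*}

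The switching I will use is $\{vz_t,uc_0\}\to\{vc_0,uz_t\}$, valid for any $c_0$ with $uc_0\in E(G)$, $vc_0\notin E(G)$ and $c_0\notin T'\cup\{z_t\}$. This operation sends a graph in $\A_v$ to a graph in $\A_u$: the edges from $J$ to $z_t$ are untouched (since $J\cap\{u,v\}=\emptyset$), and the requirement $c_0\notin T'\cup\{z_t\}$ guarantees that no edge incident to $T'$ is modified, so $\gai(T')$ stays equal to $W$. A key point is that $u\in S$ forces $\ga(u)\cap T'=\emptyset$, and $u\notin\gai(z_t)$ in $G\in\A_v$ forces $z_t\notin\ga(u)$, so the side condition $c_0\notin T'\cup\{z_t\}$ is automatic for $c_0\in\ga(u)$. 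Hence the number of valid choices of $c_0$ is simply
\begin{equation*}
f(G) \;=\; |\ga(u)\setminus\ga(v)| \;=\; kd - \beta(G), \qquad \beta(G) := |\ga(u)\cap\ga(v)|.
\end{equation*}
The analogous count for the inverse switching at $G'\in\A_u$ gives $g(G') = kd - \alpha(G') - \beta(G')$, where $\alpha(G') := |\ga(v)\cap T'|$; the extra term $\alpha(G')\geq 1$ is the quantitative incarnation of the asymmetry that $v\in W=\gai(T')$ while $u\notin W$.

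To finish I will partition $\A_v$ and $\A_u$ according to the pair $(\alpha,\beta)$. Both parameters are invariant under the switching: $\alpha$ because neither $z_t$ nor $c_0$ lies in $T'$, and $\beta$ because $c_0$ and $z_t$ belong to $\ga(u)\cap\ga(v)$ neither before nor after the switch (while membership of any other vertex of $Z$ in $\ga(u)\cap\ga(v)$ is obviously preserved). Hence the switching restricts to a bijection between the forward switching triples $(G,c_0)$ out of $\A_v^{(\alpha,\beta)}$ and the backward switching triples into $\A_u^{(\alpha,\beta)}$, and double-counting yields
\begin{equation*}
|\A_v^{(\alpha,\beta)}|(kd-\beta) \;=\; |\A_u^{(\alpha,\beta)}|(kd-\alpha-\beta).
\end{equation*}
Since $\alpha\geq 1$ in every non-empty class, this forces $|\A_v^{(\alpha,\beta)}|\leq |\A_u^{(\alpha,\beta)}|$, and summing over $(\alpha,\beta)$ gives $|\A_v|\leq |\A_u|$, as desired.

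The main obstacle is choosing a switching that simultaneously preserves $\{\gai(T')=W\}$ and admits a useful pair of invariants on both sides. The naive direct involution $u\leftrightarrow v$ on vertex labels fails because it moves $v$ out of, and $u$ into, $\gai(T')$; the switching above is essentially the minimal one-edge modification of such a swap that fixes $\{\gai(T')=W\}$, and it is exactly this extra constraint (measured by $\alpha$) that produces the inequality $\alpha\geq 1$ uniformly across all $(\alpha,\beta)$-classes.
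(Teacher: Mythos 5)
Your proof is correct and uses the same switching as the paper's proof: the paper compares $\Pr(v\in\gai(z_t)\mid F_j=J\wedge\gai(T')=W)$ to the analogous probability for $u$ by subtracting the common $\{u,v\}\subseteq\gai(z_t)$ term, then applies precisely the $\{vz_t,uz\}$-switching you describe, partitioned by $i=|\ga(u)\cap\ga(v)|$. The two minor differences are cosmetic: you pass to unordered probabilities $\Pr(J\cup\{v\}\subseteq\gai(z_t)\mid\ldots)$ via the factorial normalisation rather than dividing by $d-j$ as the paper does, and you refine the partition by additionally fixing $\alpha=|\ga(v)\cap T'|$, so that your double count gives the exact identity $|\A_v^{(\alpha,\beta)}|(kd-\beta)=|\A_u^{(\alpha,\beta)}|(kd-\alpha-\beta)$, whereas the paper works only at the level of $\beta=i$ and uses the one-sided bound $N_i\leq(kd-i-1)|\G_{u,i}|$ coming from $\alpha\geq 1$. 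Both routes lead to $|\A_{v,i}|\leq|\A_{u,i}|$ for each class, so the arguments are essentially identical.
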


\begin{proof}

We first observe that, say, 
$$
\Pr(y_{j+1} = v \mid F_j =J \wedge \gai(T') = W) = \frac{\Pr(v \in \gai(z_t) \mid
F_j =J
\wedge \gai(T') = W)}{d-j}\;
$$
since $v$ could be any of the $d-j$ remaining vertices in $\gai(z_t)\setminus J$ with uniform
probability.  So the statement of the lemma is equivalent to
$$
\Pr(v \in\gai(z_t) \mid F_j =J \wedge \gai(T') = W) \leq \Pr(u \in \gai(z_t)
\mid F_j =J \wedge \gai(T') = W)\;.
$$ 
Subtracting the probability $\Pr(\{u, v\} \subseteq \gai(z_t) \mid F_j=J \wedge \gai(T') = W)$ from both sides of the inequality leaves us with having to prove that
$$
\Pr(v \in \gai(z_t) \wedge u\notin \gai(z_t) \mid F_j=J \wedge \gai(T') = W) 
$$
is at most
$$
\Pr(u \in  \gai(z_t) \wedge v\notin \gai(z_t) \mid F_{j}=J \wedge \gai(T') = W) \;.
$$
To this end we define two families of graphs:
$$
\G_{v} = \{G \in \G(k,n,d) : v \in  \gai(z_t) \wedge u\notin \gai(z_t) \wedge F_{j}=J \wedge
\gai(T') = W\}
$$
and 
$$
\G_{u} = \{G \in \G(k,n,d) : u \in  \gai(z_t) \wedge v\notin \gai(z_t) \wedge F_{j}=J \wedge
\gai(T') = W \} \; ;
$$ 
and we establish that $|\G_v| \leq |\G_{u}|.$ 

For this purpose it is advantageous to know the size of the intersection $\ga(u)\cap\ga(v).$ In
$\G_v$ the
intersection is at most $kd-2$ as both $z_t$ and an element of
$T'$ lie in $\ga(v)\setminus \ga(u)$. In $\G_u$ the intersection is at most $kd-1$ as $z_t\in
\ga(u)\setminus\ga(v).$ For $0\leq i \leq kd-2$ we define new families of graphs
$$
\G_{v,i} = \{G \in \G(k,n,d) : v \in  \gai(z_t) \wedge u\notin \gai(z_t)  \wedge |\ga(u)\cap \ga(v)|
= i  \wedge F_{j}=J \wedge \gai(T') = W \}
$$
and 
$$
\G_{u,i} = \{G \in \G(k,n,d) : u \in  \gai(z_t) \wedge v\notin \gai(z_t) \wedge |\ga(u)\cap \ga(v)|
= i
\wedge F_{j}=J \wedge \gai(T') = W \}\;.
$$
To finish the proof it is enough to show that for all $0\leq i \leq
kd-2$ we have $|\G_{v,i}| \leq |\G_{u,i}|.$ We establish this by counting in two ways $N_i$, the
number of switchings (introduced below the proof of Lemma~\ref{Expectation of intersection} in
p.~\pageref{Switching}) between $\G_{v,i}$ and $\G_{u,i}.$ I.e. we double count the number of
pairs $(G_v,G_{u}) \in
\G_{v,i}\times\G_{u,i}$ where $G_u$ is a switching of $G_{v}$ or equivalently $G_{v}$ is a switching
of $G_u.$

\begin{figure}[ht]
 \begin{center}
 \includegraphics[width=\textwidth]{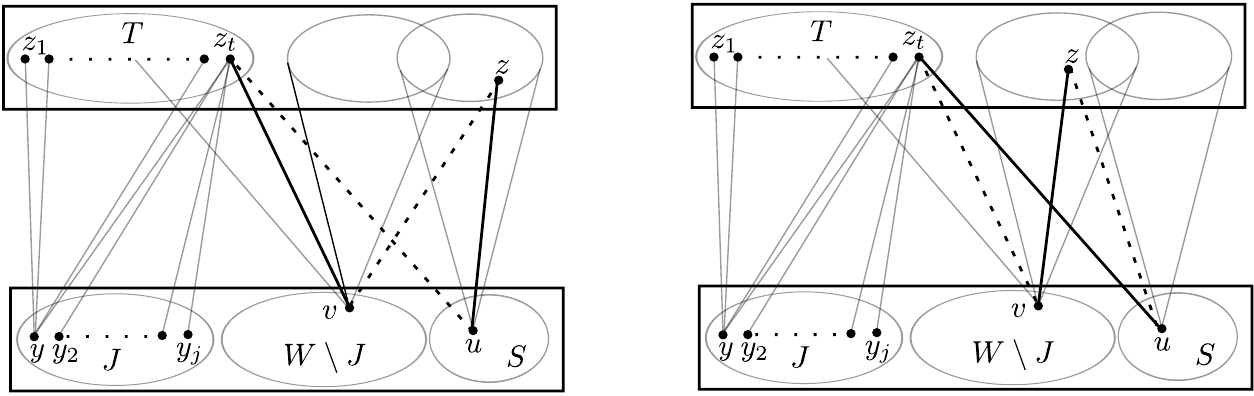}
 \end{center}
 \caption{A graph $G_v\in\G_v$ and its switching $G_u\in\G_u$. Solid lines represent edges and
dashed lines missing edges.}
 \label{fig:sw2}
\end{figure}

We pick $G_v\in \G_{v,i}$ and let $z\in \ga(u) \setminus \ga(v).$  Applying the
$\{vz_t, uz\}$-switching to $G_v$ gives a graph $G_u\in\G_{u,i},$ as the switching does not
affect neither $|\ga(u) \cap \ga(v)|$ nor the event $\{F_j=J \wedge \gai(T') = W\}$, and disconnects $v$
from $z_t$ by connecting it to $u$~(see Figure~\ref{fig:sw2}). 
There are $kd-i$ vertices in $\ga(u) \setminus \ga(v)$ and so 
$$
N_i = (kd-i) |\G_{v,i}| \;.
$$
Next we pick $G_u\in \G_{u,i}$ and let $z\in \ga(v)\setminus (\ga(u) \cup T')$. Just like
above, applying the $\{uz_t, vz\}$-switching to $G_u$ gives a graph $G_v\in\G_{v,i}$. This time
however there are at most $(kd-1-i)$ vertices in $\ga(v)\setminus(\ga(u) \cup T'),$ as $v\in \gai
(T')$. Thus,
$$
N_i \leq (kd-i-1) |\G_{u,i}|\;.
$$ 
Comparing the lower and upper bounds for $N_i$ gives $|\G_{v,i}| \leq |\G_{u,i}|$, for $0\leq i \leq
kd-2,$ and so
$$
|\G_v| = \sum_{i=0}^{kd-2} |\G_{v,i}| \leq \sum_{i=0}^{kd-2} |\G_{u,i}| \leq |\G_{u}| 
$$
as required.
\end{proof}

We resume the proof of Proposition~\ref{No edge from S to T} by noting that
\begin{eqnarray*}
1 = \sum_{v\in Y\setminus J} \Pr(y_{j+1} =v\mid F_j =J \wedge \gai(T') = W)\;,
\end{eqnarray*}
where $J$ and $W$ are taken to be subsets of $Y\setminus S$ as in the statement of the preceding
lemma. The two preceding lemmata above imply that  
$$ 
\Pr(y_{j+1} = v \mid F_j =J \wedge \gai(T') = W) \leq \Pr(y_{j+1} = u \mid F_j =J \wedge
\gai(T') = W)\;,
$$ 
for any $v\in Y\setminus J$ and some fixed $u\in S.$ Substituting in the identity above and using the symmetry of the vertices in $S$ leads to
$$
1\leq (n-j) \Pr(y_{j+1} = u \mid F_j =J \wedge \gai(T') = W)= (n-j)\frac{\Pr(y_{j+1} \in S \mid F_j
=J \wedge \gai(T') = W)}{s}\;,
$$
which in turn implies
$$
\Pr(y_{j+1} \notin S \mid F_j =J \wedge \gai(T') = W) = 1- \Pr(y_{j+1} \in S \mid F_j =J \wedge
\gai(T') = W) \leq 1-\frac{s}{n-j}\;.
$$
We take advantage of the fact that the upper bound is independent of $J$ and $W$ to deduce an upper
bound
for the probability $\Pr(y_{j+1} \notin S  \mid (F_j \cup \gai(T'))\cap S = \emptyset ).$

\begin{corollary}
Let $1\leq j \leq d-1$. Then $\Pr(y_{j+1} \notin S\mid (F_j \cup\gai(T') ) \cap S = \emptyset )  
\leq  1-s/(n-j).$
\end{corollary}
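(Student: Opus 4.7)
The plan is to derive the corollary from the pointwise bound established immediately before its statement by averaging over all admissible realizations of the pair $(F_j, \gai(T'))$. First I would observe that the conditioning event $E := \{(F_j \cup \gai(T')) \cap S = \emptyset\}$ decomposes as the disjoint union
\[
E \;=\; \bigcup_{(J,W)} \{F_j = J \wedge \gai(T') = W\},
\]
taken over all $J \subseteq Y\setminus S$ of size $j$ with $y \in J$ (since $y_1 = y$ always lies in $F_j$) and all $W \subseteq Y\setminus S$ realizable as $\gai(T')$. These are exactly the pairs $(J,W)$ to which Lemmas~\ref{u vs notin gai(T')} and~\ref{u vs in gai(T')} apply, and hence for which the uniform upper bound
\[
\Pr(y_{j+1} \notin S \mid F_j = J \wedge \gai(T') = W) \leq 1 - \frac{s}{n-j}
\]
has already been established in the display preceding the corollary.

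Next I would invoke the law of total probability to write
\[
\Pr(y_{j+1} \notin S \mid E) = \sum_{(J,W)} \Pr(y_{j+1} \notin S \mid F_j = J \wedge \gai(T') = W)\, \Pr(F_j = J \wedge \gai(T') = W \mid E).
\]
The weights $\Pr(F_j = J \wedge \gai(T') = W \mid E)$ form a probability distribution on the set of admissible pairs, so the right-hand side is a convex combination of numbers each bounded by $1 - s/(n-j)$. This convex combination is therefore itself bounded by $1 - s/(n-j)$, which is the desired conclusion.

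I do not anticipate any genuine obstacle at this stage: the entire technical content of the argument sits in the switching proof of Lemma~\ref{u vs in gai(T')}, which delivered a bound that is uniform in $(J,W)$. The corollary is thus reduced to a routine bookkeeping application of the law of total probability, and in particular no further information about the distribution of $(F_j, \gai(T'))$ conditional on $E$ is required.
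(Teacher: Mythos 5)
Your argument is correct and is essentially the same as the paper's: decompose the conditioning event over realizations of $(F_j,\gai(T'))$ via the law of total probability, then apply the uniform pointwise bound to the resulting convex combination. If anything your version is slightly more careful, since you write the weights as probabilities conditioned on the event $E=\{(F_j\cup\gai(T'))\cap S=\emptyset\}$ (so they sum to $1$), whereas the paper's displayed sum uses the unconditioned weights $\Pr(F_j=J\wedge\gai(T')=W)$, which over the stated range of $(J,W)$ sum to $\Pr(E)$ rather than $1$ --- a small normalization slip that your bookkeeping silently repairs.
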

\begin{proof}
The probability
$$
\Pr(y_{j+1} \notin S \mid (F_j \cup \gai(T') ) \cap S = \emptyset )  
$$
equals
$$
\sum_{J,W}  \Pr(y_{j+1} \notin S \mid F_j =J \wedge \gai(T') = W) \, \Pr(F_j =J \wedge\gai(T') = W)
\;, 
$$
where the sums are over all $j$-element subsets $J$ of $Y\setminus S$ and all subsets $W\subset
Y\setminus S.$ This in turn is at most
$$
\left(1-\frac{s}{n-j}\right) \sum_{J,W} \Pr(F_j =J \wedge \gai(T')=W)    =    1-\frac{s}{n-j} \;.
\qedhere
$$
\end{proof}

We can finally deduce \eqref{inductive step for e(S,T)=0}. Recall that
$\gai(z_t)=\{y_1,\dots,y_d\}$, where $y_1=y$.  
\begin{eqnarray*}
\Pr(\gai(z_t) \cap S =\emptyset \mid  \gai(T') \cap S=\emptyset)   &  =  &  \prod_{j=1}^{d-1} 
\Pr(y_{j+1} \notin S \mid (F_j \cup \gai(T') ) \cap S = \emptyset ) \\	         
&\leq&  \prod_{j=1}^{d-1}  \left(1-\frac{s}{n-j}\right)\\
& =  &  \Pr(\gai(z_1) \cap S = \emptyset)\;.
\end{eqnarray*}
Equation \eqref{ground case for e(S,T)=0} was used for the last equality. This finishes the
inductive step and concludes the proof of Proposition~\ref{No edge from S to T}.
\end{proof}  

A special case that will be of particular importance in the next section is when $S=A\setminus\{y\}$
and $T$ is a singleton. For this case we would like to have not only an upper bound, but also an asymptotic expression for the
probability that there are no edges between the two sets. 

\begin{lemma}\label{Pr(A^-_z)}
Let $Y,A, Z, G $ and $y$ be like in the statement of Theorem~\ref{G[A,ga(y)]}. Suppose that $
z_1\in\ga(y).$ Then 
\begin{eqnarray*}
 \Pr(\gai(z_1)\cap A = \{y\}) = (1+o(1)) \exp{\left(-\frac{kd^2}{n}\right)}\;,
\end{eqnarray*}
provided only that $d=o(n^{2/3}).$
\end{lemma}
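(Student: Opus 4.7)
The plan is to apply the exact formula in the ground case \eqref{ground case for e(S,T)=0} of Proposition~\ref{No edge from S to T}, and then extract the asymptotic $\exp(-kd^2/n)$ by a careful Taylor expansion. The setup is as follows: set $S := A \setminus \{y\}$, which has size $kd-1$ and is contained in $Y \setminus \{y\}$, and let $T := \{z_1\}$. Because $z_1 \in \ga(y)$ forces $y \in \gai(z_1)$, the event $\{\gai(z_1) \cap A = \{y\}\}$ is equivalent to $\{\gai(z_1) \cap S = \emptyset\}$. Invoking \eqref{ground case for e(S,T)=0} produces the closed form
$$
\Pr(\gai(z_1) \cap A = \{y\}) = \prod_{j=1}^{d-1}\left(1 - \frac{kd-1}{n-j}\right).
$$

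Next, I would pass to logarithms and Taylor-expand. Writing $x_j := (kd-1)/(n-j)$, the hypothesis $d = o(n^{2/3})$ together with the standing constraint $kd \le n$ ensures that each $x_j$ is $O(d/n) = o(1)$ uniformly in $j$, so the expansion $\log(1 - x_j) = -x_j - x_j^2/2 + O(x_j^3)$ is valid term-by-term. The leading sum is handled via the standard estimate $\sum_{j=1}^{d-1} 1/(n-j) = (d-1)/n + O(d^2/n^2)$, giving
$$
-\sum_{j=1}^{d-1} \frac{kd-1}{n-j} = -\frac{(kd-1)(d-1)}{n} + O\!\left(\frac{kd^3}{n^2}\right) = -\frac{kd^2}{n} + O\!\left(\frac{kd}{n}\right) + O\!\left(\frac{kd^3}{n^2}\right),
$$
while the quadratic Taylor correction contributes $\sum_j O(x_j^2) = O(k^2 d^3/n^2)$ and the higher-order terms are even smaller.

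The only real obstacle is verifying that each error term above is $o(1)$ under $d = o(n^{2/3})$; this is a book-keeping step rather than a conceptual one. Since the ambient theorems treat $k$ as a fixed positive rational, each of $kd/n$, $kd^3/n^2$, and $k^2 d^3/n^2$ reduces to $O(d/n) + O(d^3/n^2)$, both of which vanish under $d = o(n^{2/3})$. Exponentiating the estimate $\sum_j \log(1 - x_j) = -kd^2/n + o(1)$ then produces the claim $\Pr(\gai(z_1) \cap A = \{y\}) = (1+o(1))\exp(-kd^2/n)$.
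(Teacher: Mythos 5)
Your proof is correct and rests on the same engine as the paper's: the exact product formula from the ground case \eqref{ground case for e(S,T)=0} of Proposition~\ref{No edge from S to T}, followed by an asymptotic analysis under $d=o(n^{2/3})$. The only genuine difference is in how the asymptotics are extracted. The paper splits the task into an upper bound (obtained from the final chain of inequalities in Proposition~\ref{No edge from S to T}) and a lower bound (obtained by crudely replacing every factor $1-(kd-1)/(n-j)$ with the smallest one $1-(kd-1)/(n-d)$ and then invoking $(1-x)\geq(1+O(x^2))e^{-x-x^2}$). You instead take logarithms of the exact product and sum the Taylor expansion of each factor term-by-term, controlling the linear, quadratic and cubic error sums simultaneously; this yields the two-sided estimate in one pass and is marginally cleaner. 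Both approaches use the boundedness of $k$ (implicitly, by reducing errors to powers of $d/n$) and both need precisely $d=o(n^{2/3})$ so that $d^3/n^2=o(1)$ kills the quadratic correction. Your book-keeping is accurate: $(kd-1)(d-1)/n = kd^2/n + O(kd/n)$, $\sum_j 1/(n-j)=(d-1)/n+O(d^2/n^2)$, and the quadratic sum is $O(k^2 d^3/n^2)$, all of which vanish under the stated hypotheses.
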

 \begin{proof}
Setting $T=\{z_1\}$ and $S= A\setminus \{y\}$ in Proposition~\ref{No edge from S to T} gives
 $$
 \Pr(\gai(z_1)\cap A = \{y\}) \leq (1+o(1))  \exp\left(- \frac{k d^2}{n}\right)\;.
 $$
To get a lower bound we first note that Proposition~\ref{No edge from S to T} gives 
$$
 \Pr(\gai(z_1)\cap A = \{y\}) =   \left(1-\frac{kd-1}{n-1}\right) \left(1-\frac{kd-1}{n-2}\right)
\dots \left(1-\frac{kd-1}{n-d+1}\right) \geq \left(1- \frac{kd-1}{n-d}\right)^{d-1}\;.
$$
We make use of the inequality $(1-x)\geq (1+O(x^{2}))\exp{(-x-x^2)}$ which holds when $x\tends
0$. 
When $d=o(n^{2/3})$ the ratio $\frac{(kd-1)^2(d-1)}{(n-d)^2} = o(1)$ and so
\begin{eqnarray*}
\Pr(\gai(z_1)\cap A = \{y\}) &\geq&
\left(1+O\left(\frac{(kd-1)^2}{(n-d)^2}\right)\right)^{d-1}
\exp{\left(-\frac{kd-1}{n-d}-\frac{(kd-1)^2}{(n-d)^2}\right)}^{d-1} \\
&  = 
&(1+o(1))\exp{\left(-\frac{(d-1)(kd-1)}{n}\right)
} \exp{\left(-\frac{(kd-1)^2}{(n-d)^2} -\frac{d(kd-1)}{n(n-d)}\right)}^{d-1} \\
&  =   & (1+o(1)) \exp{\left(-\frac{k d^2}{n}\right)}\;. \qedhere
\end{eqnarray*}
\end{proof}

 \section{Proof of Theorem~\ref{G[A,ga(y)]}}\label{Thm 3}
 
When $d=o(\sqrt{n})$ it is straightforward to show there is no matching in $H$ \textit{whp}. Take $y' \in
A\setminus\{y\}.$ By Lemma~\ref{Expectation of intersection} we know that the
expected value $\E(|\ga(y)\cap \ga(y')|) =o (1).$ Thus the probability $\Pr(\ga(y) \cap \ga(y') =
\emptyset) = 1-o(1)$ and consequently there is no matching in $H$ with
probability $1-o(1).$ For larger values of $d$ this simple argument does not work.
 
We follow Erd\H{o}s and R\'enyi in relating the event of finding a perfect matching in $H$ to the
event that the minimum degree of the induced subgraph is 1. As $k$ is not necessarily 1
and $y$ lies in the bottom layer there is no symmetry between the top and bottom layers. To deal
with this is a technical difficulty we will to consider $\delta^+(H)$ and $\delta^-(H)$ separately.
The proof of Theorem~\ref{G[A,ga(y)]} is broken down to four steps.

The first is to obtain a qualitative description of the range of $d$ for which $\delta^-(H)=1$
\textit{whp}.
Note that $\delta^-(H)$ cannot be zero since $y\in A$.
 \begin{lemma}\label{delta^-(H)}
 Let $H$ be the graph introduced in Theorem~\ref{G[A,ga(y)]} and 
 $$
 c = \frac{kd^2}{n} - \log(kd)\;.
 $$
 Then
 \begin{enumerate}
\item[(i)] $\Pr(\delta^-(H)=1)= 1 -o(1)$ when  $c \rightarrow - \infty$ or when $d$ is a constant.
\item[(ii)] $\Pr(\delta^-(H)>1)=1-o(1)$ when $c \rightarrow +\infty.$ 
\end{enumerate}
Furthermore there is no perfect matching in $H$ \whp when $c\rightarrow -\infty.$
 \end{lemma}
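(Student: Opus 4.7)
The plan is to let $X = |\{z\in\Gamma(y):\Gamma^{-1}(z)\cap A=\{y\}\}|$, observe that $\delta^-(H)=1$ if and only if $X\geq 1$ (we always have $\delta^-(H)\geq 1$ since $y\in A$), and then run standard first- and second-moment arguments. By the symmetry of biregular bipartite graphs (just as in Lemma~\ref{Expectation of intersection}) we have $\mathbb{E}(X)=kd\cdot p$, where $p:=\Pr(\Gamma^{-1}(z_1)\cap A=\{y\})$ does not depend on the choice of $z_1\in\Gamma(y)$.

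For part (ii), I would show $\mathbb{E}(X)=o(1)$ when $c\to+\infty$ and apply Markov. Proposition~\ref{No edge from S to T} applied with $S=A\setminus\{y\}$ and $T=\{z_1\}$ bounds $p$ by the explicit product $\prod_{j=1}^{d-1}(1-(kd-1)/(n-j))$; a short split on whether $d\leq n/2$ or not confirms $\mathbb{E}(X)=o(1)$ whenever $c\to+\infty$, yielding $\Pr(\delta^-(H)>1)=\Pr(X=0)=1-o(1)$.

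For part (i) with $c\to-\infty$, the constraint $kd^2\leq n\log(kd)\leq n\log n$ forces $d=o(n^{2/3})$, placing us in the range of Lemma~\ref{Pr(A^-_z)}, which gives the sharp estimate $\mathbb{E}(X)=(1+o(1))e^{-c}\to+\infty$. For the second moment, the first inequality of Proposition~\ref{No edge from S to T} with $T=\{z_1,z_2\}$ and $S=A\setminus\{y\}$ gives
$$\Pr\bigl(\Gamma^{-1}(z_1)\cap A=\{y\}\;\wedge\;\Gamma^{-1}(z_2)\cap A=\{y\}\bigr)\;\leq\; p^2,$$
so summing over ordered pairs of distinct elements of $\Gamma(y)$ produces $\mathbb{E}(X^2)\leq \mathbb{E}(X)+\mathbb{E}(X)^2$ and hence $\mathrm{Var}(X)\leq \mathbb{E}(X)$. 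Chebyshev's inequality then yields
$$\Pr(X\leq 1)\;\leq\;\frac{\mathrm{Var}(X)}{(\mathbb{E}(X)-1)^2}\;\leq\;\frac{\mathbb{E}(X)}{(\mathbb{E}(X)-1)^2}\;=\;o(1),$$
so actually $\Pr(X\geq 2)=1-o(1)$; this proves the ``furthermore'' clause as well, because whenever $X\geq 2$ there are distinct $z_1,z_2\in\Gamma(y)$ each of whose sole in-neighbour in $A$ is $y$, so any perfect matching of $H$ would need to match $y$ to both, a contradiction.

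For part (i) with $d$ fixed, $\mathbb{E}(X)$ is a bounded constant and Chebyshev is too weak. Instead I would directly control $E$, the number of edges from $A\setminus\{y\}$ to $\Gamma(y)$: Lemma~\ref{Expectation of intersection} gives $\mathbb{E}(E)=(kd-1)\cdot kd(d-1)/(n-1)=O(1/n)\to 0$, so $E=0$ with probability $1-o(1)$, and then every $z\in\Gamma(y)$ has $y$ as its only in-neighbour in $A$, so $\delta^-(H)=1$. The main obstacle throughout is the variance estimate for $c\to-\infty$, which is where the control on conditional probabilities provided by Proposition~\ref{No edge from S to T} is essential.
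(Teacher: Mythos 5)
Your proof is correct and essentially mirrors the paper's: the same count $X$ (the paper's $A^-$) of vertices $z\in\Gamma(y)$ with $\Gamma^{-1}(z)\cap A=\{y\}$, the same first-moment bound via Proposition~\ref{No edge from S to T}, the sharp estimate $\E(X)=(1+o(1))e^{-c}$ via Lemma~\ref{Pr(A^-_z)} when $c\le 0$, the variance bound $\var(X)\le\E(X)$ from the two-point case of Proposition~\ref{No edge from S to T}, and a Chebyshev finish. The one place you do more is the $d$ constant case: the paper's proof of this lemma, as written, only treats $c\to-\infty$ (it leans on the $d=o(\sqrt{n})$ remark preceding the lemma, which strictly speaking gives ``no matching'' rather than $\delta^-(H)=1$), whereas you supply the direct edge-counting argument, and your single application of Chebyshev to $\Pr(X\le 1)$ cleanly unifies part (i) with the ``furthermore'' clause, which the paper establishes with two separate invocations.
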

\begin{proof}
We estimate the expectation and variance of the number of vertices $z\in \Gamma(y)$
that satisfy $\gai(z)\cap A = \{y\}$. So for $z\in \ga(y)$ we define the event 
\begin{eqnarray*}
A^-_z =\{\gai(z)\cap A = \{y\} \}\;;
\end{eqnarray*}
and the random variable
\begin{eqnarray*}
A^- = \sum_{z\in \ga(y)} 1_{A^-_{z}}\;.
\end{eqnarray*}

The linearity of expectation and the symmetry of biregular bipartite graphs gives
\begin{eqnarray}\label{E(A)}
\E(A^-) = kd \Pr(A^-_{z_1})\qquad \mbox{ for any $z_1\in \ga(y)$}\;.
\end{eqnarray}
Setting $T=\{z_1\}$ and $S=A\setminus \{y\}$ in Proposition~\ref{No edge from S to T} yields
$$
\E(A^-) \leq  (1+o(1)) kd\, \exp\left(-\frac{d (kd-1)}{n}\right) =O\left( kd\,
\exp\left(-\frac{kd^2}{n}\right)\right) = O(e^{-c})\;.
$$
When $c\rightarrow + \infty$ the expectation is $\E(A^-) = o(1)$ and so $\Pr(A^->0)\leq
\E(A^-)=o(1)$ and conclusion $(ii)$ follows.  

When $c\leq 0$ we certainly have that $d=o(n^{2/3})$ and so Lemma~\ref{Pr(A^-_z)} gives
$$
\E(A^-)=\sum_{z\in \ga(y)}  \Pr(\gai(z)\cap A = \{y\}) = (1+o(1)) kd \,
\exp\left(-\frac{kd^2}{n}\right) = (1+o(1)) e^{-c}\;.
$$
Assuming furthermore that $c\rightarrow - \infty$ gives $\E(A^-)\rightarrow+\infty.$ To be able to
say something about the probability $\Pr(A^->0)$ we need to control the variance of $A^-.$
\begin{eqnarray*}
\var(A^-)     &  =   & \mathbb{E}((A^-)^2) -(\mathbb{E}(A^-))^2 \\
                        &  =   & \sum_{z\in \ga(y)}\sum_{ z'\in \ga(y)} \Pr(A^-_{z}\wedge
A^-_{z'}) - \left(\sum_{z\in \Gamma(y)} \Pr(A^-_{z})\right)^2\\
                        &  =   & \sum_{z\in \ga(y)}\sum_{z'\neq z} \Pr(A^-_{z}\wedge
A^-_{z'}) + \sum_{z\in \ga(y)} \Pr(A^-_{z})  - \left(\sum_{z\in \Gamma(y)} \Pr(A^-_{z})\right)^2\;.
\end{eqnarray*}
Setting $T=\{z,z'\}$ and $S = A\setminus\{y\}$ in Proposition~\ref{No edge from S to T} gives $ \Pr(A^-_{z}\wedge A^-_{z'}) \leq \Pr(A^-_z)^2$. Consequently
\begin{eqnarray*}
\var(A^-)   \leq \sum_{z\in \ga(y)} \Pr(A^-_{z}) (1-\Pr(A^-_{z}))\leq \E(A^-) \;.
\end{eqnarray*}

We can now finish off the proof of the Proposition~\ref{delta^-(H)} by applying Chebyshev's
inequality.
\begin{lemma}[Chebyshev's inequality]\label{Chebyshev}
Let $X$ be a non-negative random variable with expected value $\mu$ and
non-zero variance $\sigma^2.$ Then for any $x\in \mathbb{R}^+$
$$
\Pr(|X-\mu| \geq x \sigma ) \leq \frac{1}{x^2}\;.
$$
\end{lemma}  
Applying the inequality to $A^-$ gives 
$$
\Pr(\delta^-(H)>1) = \Pr(A^-=0) \leq \Pr(|A^- -\mathbb{E}(A^-)| \geq \mathbb{E}(A^-) ) \leq\frac{1}{\E(A^-)} = o(1) 
$$
when $c\tends -\infty,$ implying conclusion $(i)$.

For the final conclusion we observe that there can be no matching in $H$ when $A^-\geq 2$ as there
would then exist two vertices in $\ga(y)$ that are only joined to $y.$ A second application of
Chebyshev's inequality gives that $\Pr(A^-\geq 2) = 1-o(1)$ when $c\tends -\infty.$ 
\end{proof}

We have proved the first statement in Theorem~\ref{G[A,ga(y)]}. The second statement is trickier.

The second step in the proof of Theorem~\ref{G[A,ga(y)]} is to show that $\delta^+(H) > 0$ \whp when $c\tends +\infty.$

 \begin{lemma}\label{delta^+(H)}
 Let $H$ be the graph introduced in Theorem~\ref{G[A,ga(y)]} and 
 $$
 c = \frac{kd^2}{n} - \log(kd)\;.
 $$
 Then
$$
\Pr(\delta^+(H)=0)=o(1)
$$ 
when $c\tends +\infty.$
\end{lemma}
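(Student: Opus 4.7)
The plan is to mirror the upper-bound half of Lemma~\ref{delta^-(H)} with the roles of the two layers interchanged, using Proposition~\ref{No edge from S to T} in place of an explicit asymptotic. The first observation is that $y$ itself can never witness $\delta^+(H)=0$: since $\ga(y)\subseteq\ga(y)$ trivially, the out-degree of $y$ in $H$ equals $|\ga(y)|=kd\geq 2$. Hence $\delta^+(H)=0$ occurs precisely when there exists some $y'\in A\setminus\{y\}$ with $\ga(y')\cap \ga(y)=\emptyset$.

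I would then run the first moment method. For each $y'\in A\setminus\{y\}$ introduce the event
$$
B^+_{y'} = \{\ga(y')\cap \ga(y)=\emptyset\}
$$
and the random variable $B^+ = \sum_{y'\in A\setminus\{y\}} \mathbf{1}_{B^+_{y'}}$. By the preceding observation,
$$
\Pr(\delta^+(H)=0)=\Pr(B^+\geq 1)\leq \E(B^+)\;.
$$
Now apply Proposition~\ref{No edge from S to T} with $S=\{y'\}\subseteq Y\setminus\{y\}$ and $T=\ga(y)$, so that $|S|=1$ and $|T|=kd$. This immediately gives
$$
\Pr(B^+_{y'})=\Pr(\ga(\{y'\})\cap \ga(y)=\emptyset)\leq (1+o(1))\exp\!\left(-\frac{kd^2}{n}\right)\,.
$$
Summing over the $kd-1$ admissible $y'$ and substituting $c=kd^2/n-\log(kd)$ yields
$$
\E(B^+)\leq (1+o(1))(kd-1)\exp\!\left(-\frac{kd^2}{n}\right) = O(e^{-c}) = o(1)
$$
whenever $c\tends+\infty$, so Markov's inequality completes the argument.

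The main ``hard part'' of this lemma has already been discharged in Section~\ref{Preliminary}: it is precisely the conditional-probability bound of Proposition~\ref{No edge from S to T} that lets us treat the event $B^+_{y'}$ as if the neighbourhood of $y'$ were chosen independently of $\ga(y)$, despite the biregularity constraints. Once that proposition is available, the present statement is a one-line first moment calculation, completely analogous to the $c\tends+\infty$ half of Lemma~\ref{delta^-(H)} except that the summation runs over $A\setminus\{y\}$ rather than $\ga(y)$, and no matching variance estimate is needed because we only require an upper bound on $\Pr(\delta^+(H)=0)$.
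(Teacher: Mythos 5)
Your proof is correct and takes the same approach as the paper: a first-moment bound on the number of $y'\in A\setminus\{y\}$ with $\ga(y')\cap\ga(y)=\emptyset$, with the key estimate supplied by Proposition~\ref{No edge from S to T} applied to $S=\{y'\}$, $T=\ga(y)$. Your explicit remark that $y$ itself always has out-degree $kd>0$ in $H$ is a small clarification that the paper leaves implicit, but the argument is otherwise identical.
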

\begin{proof}
We estimate the expected number of vertices $y'\in A\setminus\{y\}$ that satisfy $\ga(y')\cap
\ga(y) = \emptyset$. So for $y'\in A\setminus\{y\}$ we define the event 
\begin{eqnarray}\label{A^+_y'}
A^+_{y'} =\{\ga(y')\cap \ga(y) = \emptyset \}\;;
\end{eqnarray}
and the random variable
\begin{eqnarray*}
A^+ = \sum_{y'\in A\setminus \{y\}} 1_{A^+_{y'}}\;.
\end{eqnarray*}

The linearity of expectation and the symmetry of biregular bipartite graphs gives
\begin{eqnarray*}
\E(A^+) = (kd-1)  \Pr(A^+_{y'})\qquad \mbox{ for any $y'\in A\setminus\{y\}$}\;.
\end{eqnarray*}
Setting $S=\{y'\}$ and $T=\ga(y)$ in Proposition~\ref{No edge from S to T} yields
$$
\E(A^+) = O\left( kd\, \exp\left(-\frac{kd^2}{n}\right)\right) = O(e^{-c})\;.
$$
When $c\rightarrow + \infty$ the expectation $\E(A^+) = o(1)$ and so the probability
$\Pr(\delta^+(H)=0)=\Pr(A^+>0)=o(1).$ 
\end{proof}
 
To prove the existence of a perfect matching in $H$ we will rely on the Frobenius-K\"onig theorem
(see e.g.~\cite[Theorem 1.7.1]{Marcus-Minc1992}), which is equivalent to the well known Hall's
theorem.
\begin{lemma}[The Frobenius-K\"onig theorem]\label{FK}
Let $A$ and $B$ be two sets of equal size. Suppose that $H$ is a bipartite graph with vertex set
$(A,B).$ Then $H$ has no perfect matching if and only if there are non-empty sets $S\subseteq A$ 
and $ T\subseteq B$ such that $|S|+|T|= |A|+1$ and $\ga(S)\cap T =
\emptyset.$ 
\end{lemma}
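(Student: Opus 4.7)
The plan is to derive this from Hall's marriage theorem, which asserts that a bipartite graph on parts $A,B$ of equal size admits a perfect matching if and only if $|\ga(S)|\geq |S|$ for every $S\subseteq A$. The authors themselves say the two results are equivalent, so the task reduces to a short exchange of complements once Hall is in hand.

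For the ``if'' direction, suppose non-empty $S\subseteq A$ and $T\subseteq B$ exist with $|S|+|T|=|A|+1$ and $\ga(S)\cap T=\emptyset$. Then $\ga(S)\subseteq B\setminus T$, so $|\ga(S)|\leq |B|-|T|=|A|-|T|=|S|-1<|S|$, which violates Hall's condition and rules out a perfect matching.

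For the ``only if'' direction, assume $H$ has no perfect matching, and use Hall to produce a non-empty $S\subseteq A$ with $|\ga(S)|<|S|$. Set $T:=B\setminus\ga(S)$; then $\ga(S)\cap T=\emptyset$ by construction, and $|T|=|B|-|\ga(S)|\geq |A|-|S|+1$, so $|S|+|T|\geq |A|+1$. If the inequality is strict, discard $|S|+|T|-(|A|+1)$ elements from $T$; since $|S|\leq |A|$ the trimmed $T$ still satisfies $|T|\geq 1$, so both sets remain non-empty and the prescribed equality is attained.

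The only real ``obstacle'' is Hall's theorem itself, which is classical enough to invoke as a black box. Were a self-contained argument needed, I would use the standard augmenting-path approach: fix a maximum matching $M$ and an $A$-vertex $a$ unsaturated by $M$, build the alternating tree rooted at $a$, and let $S$ be the set of $A$-vertices reached. Every $B$-neighbour of $S$ lies in this tree (else one could augment) and is $M$-matched to some vertex of $S\setminus\{a\}$, giving $|\ga(S)|=|S|-1$ and the required deficiency.
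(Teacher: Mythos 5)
Your derivation is correct. Note that the paper does not actually prove this lemma—it cites it as Theorem 1.7.1 of Marcus and Minc and merely remarks that it is equivalent to Hall's theorem; your proof supplies exactly that standard complementary-set translation (in both directions, with the right handling of the non-emptiness of $T$ after trimming), so it fills the gap the paper leaves to the reader in the intended way.
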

A pair $(S,T)$ is called \emph{problematic} if $|S|+|T|= |A|+1$ and $\ga(S)\cap T =
\emptyset.$
We show that the probability that a problematic pair $(S,T)$ exists in $H$ is $o(1)$ when $c\tends
+\infty$. For technical reasons we need to distinguish between two ranges for $c.$ The third step in the proof of Theorem~\ref{G[A,ga(y)]} is to deal
with the case when $c$ is larger than a constant multiple of $\log(kd).$

 \begin{proposition}\label{Large c}
 Let $H$ be the graph introduced in Theorem~\ref{G[A,ga(y)]} and 
 $$
 c = \frac{kd^2}{n} - \log(kd)\;.
 $$
Suppose that $c\geq 5 \log{(kd)}.$ Then 
 $$
 \Pr(\mbox{There is no perfect matching in $H$}) = O\left( k^2d^2 \exp\left(-\frac{kd^2}{2n}\right)
\right)\;.
 $$
In particular there is a perfect matching in $H$ \textit{whp}.
\end{proposition}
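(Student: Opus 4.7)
The plan is to follow the classical Erd\H{o}s--R\'enyi strategy via the Frobenius--K\"onig theorem (Lemma~\ref{FK}): no perfect matching exists in $H$ precisely when there is a problematic pair $(S,T)$. I would first observe that any such pair necessarily has $y\notin S$, since otherwise $\ga(S)\supseteq\ga(y)\supseteq T$ would force $T=\emptyset$. Hence $S\subseteq A\setminus\{y\}$, and writing $s=|S|$ and $t=|T|=kd+1-s$, the admissible ranges are $1\leq s\leq kd-1$ and $2\leq t\leq kd$.

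The probabilistic input is Proposition~\ref{No edge from S to T}, whose hypotheses are satisfied since $S\subseteq Y\setminus\{y\}$ and $T\subseteq\ga(y)$; it gives
\[
\Pr\bigl(\ga(S)\cap T=\emptyset\bigr)\leq(1+o(1))\exp\!\left(-\frac{dst}{n}\right).
\]
Noting the identity $\binom{kd-1}{s}\binom{kd}{t}=\binom{kd-1}{s}\binom{kd}{s-1}$, the union bound over all problematic pairs becomes
\[
\Pr(\text{no PM in }H)\leq(1+o(1))\sum_{s=1}^{kd-1}\binom{kd-1}{s}\binom{kd}{s-1}\exp\!\left(-\frac{dst}{n}\right).
\]

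I would split the sum at $s=\lfloor kd/2\rfloor$; after relaxing $\binom{kd-1}{s}\leq\binom{kd}{s}$ the summand is symmetric in $s\leftrightarrow t$, so the half with $s>kd/2$ is handled identically. For $s\leq kd/2$ one has $t\geq kd/2$, whence $dst/n\geq kd^2 s/(2n)$. Writing $x:=\exp(-kd^2/(2n))$, the hypothesis $c\geq 5\log(kd)$ is equivalent to $kd^2/n\geq 6\log(kd)$, so $x\leq(kd)^{-3}$. The $s=1$ contribution is at most $(kd-1)\exp(-kd^2/n)=O(kd\cdot x^2)=O((kd)^{-2}x)$, comfortably below the target $O((kd)^2 x)$. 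For $s\geq 2$ I use the crude bound $\binom{kd-1}{s}\binom{kd}{s-1}\leq(ekd)^{2s-1}$, so the $s$-th summand is at most $(ekd)^{2s-1}x^s$; the resulting geometric series has ratio $(ekd)^2 x=O((kd)^{-1})$ and thus total $O((kd)^3 x^2)=O(x)$, again within the target.

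The main obstacle is the middle range of $s$, where the binomial factor $\binom{kd}{s}^2$ can be as large as $\exp(O(kd))$ and must be beaten by the exponential decay $\exp(-dst/n)$, which is strongest precisely there. The assumption $c\geq 5\log(kd)$ is exactly what ensures that each extra unit of $s$ pays a $(kd)^2$ binomial cost but earns at least a $(kd)^3$ saving from the exponential, so the sum collapses geometrically to its boundary. For smaller $c$ this simple book-keeping fails and a more delicate argument must be used; presumably this is the content of a companion proposition covering the ``small $c$'' regime.
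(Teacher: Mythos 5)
Your proof is correct and follows essentially the same route as the paper: Frobenius--K\"onig to reduce to problematic pairs, Proposition~\ref{No edge from S to T} for the non-adjacency probability, a union bound exploiting the $s\leftrightarrow t$ symmetry to restrict to $s\leq kd/2$, and the hypothesis $c\geq 5\log(kd)$ (i.e.\ $\exp(-kd^2/(2n))\leq(kd)^{-3}$) to make the resulting series geometric with ratio $O((kd)^{-1})$. The only cosmetic differences are that you treat the $s=1$ term separately and bound the binomial product by $(ekd)^{2s-1}$ rather than absorbing everything into $(kd)^{2j}$ as the paper does; both lead to the same $O(k^2d^2\exp(-kd^2/(2n)))$ conclusion.
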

\begin{proof}
It follows by Lemma~\ref{FK} that no perfect matching exists in $H$ if and only if there are
non-empty sets
$S\subseteq A\setminus\{y\}$ and $T\subseteq \ga(y)$ such that $|S|+|T| = kd+1$ and $\ga(S)\cap T =
\emptyset.$ So we want to bound from above the probability a problematic pair of sets
$(S,T)$ exists.
 
For a pair of fixed sets $(S,T)$ where $|S|=j,$ Proposition~\ref{No edge from S to T} gives
\begin{eqnarray*}
\Pr(\ga(S)\cap T = \emptyset) \leq (1+o(1)) \exp{\left(-\frac{d |S| |T|}{n}\right)} =  (1+o(1))
\exp\left(-\frac{d j (kd+1-j)}{n}\right)\;. 
\end{eqnarray*}
For a given $j$ there are at most $\tbinom{kd-1}{j} \tbinom{kd}{kd+1-j} \leq \tbinom{kd}{j}
\tbinom{kd}{j-1}$ possible problematic pairs of sets $(S,T).$ Applying a union bound gives
\begin{eqnarray*}
\Pr(\mbox{There is no perfect matching in $H$})    & = & O\left( \sum_{j=1}^{kd} \binom{kd}{j}
\binom{kd}{j-1} \exp\left(-\frac{j (kd-j+1) d}{n} \right) \right) \;.
\end{eqnarray*}
Changing $j$ to $kd-j+1$ does not affect the summand, so 
\begin{eqnarray*}
\Pr(\mbox{There is no perfect matching in $H$})   & = & O\left( \sum_{1\leq j \leq kd/2}
\binom{kd}{j} \binom{kd}{j-1} \exp\left(-\frac{j (kd-j+1) d}{n} \right) \right) \\
								& = & O\left( \sum_{1\leq j\leq
kd/2} \binom{kd}{j}^2 \exp\left(-\frac{j (kd-j) d}{n} \right) \right) \\ 
								& = & O\left( \sum_{1\leq j\leq
kd/2} \left(k^2 d^2 \exp\left(-\frac{(kd-j) d}{n} \right) \right)^j \right) \\ 
								& = & O\left( \sum_{j=1}^\infty
\left(k^2 d^2 \exp\left(-\frac{ k d^2}{2n} \right) \right)^j \right) \;.
\end{eqnarray*}
The lower bound on $c$ implies that $k^2 d^2 \exp\left(-\frac{ k d^2}{2n} \right)=O((kd)^{-1})<1$. So
$$
\Pr(\mbox{There is no perfect matching in $H$})  = O\left(k^2 d^2 \exp\left(-\frac{ k d^2}{2n}
\right)\right) = O((kd)^{-1}) = o(1)\;. \qedhere 
$$
\end{proof}

The above argument does not work when we merely assume that $c\rightarrow +\infty.$ The forth and
final task in the proof of Theorem~\ref{G[A,ga(y)]} is to
adapt the argument provided by Erd\H{o}s and R\'enyi in~\cite{Erdos-Renyi1964} to the induced subgraph $H$. 

The key is to consider the minimum out and in degrees. Lemma~\ref{delta^-(H)}
and Lemma~\ref{delta^+(H)} combined imply that $\Pr(\{\delta^-(H)=1
\vee \delta^+(H)=0\}) =o(1)$ when $c\tends +\infty$. So
\begin{eqnarray*}
\Pr(\mbox{There is no matching in $H$}) &  =   & \Pr(\mbox{There is no matching in $H$} \wedge
\delta^-(H)>1 \wedge \delta^+(H)>0) + \\
								    & ~   & \Pr(\mbox{There is no
matching in $H$} \wedge \{\delta^-(H)=1 \vee \delta^+(H)=0\})\\
								   &\leq& \Pr(\mbox{There is no
matching in $H$}\wedge  \delta^-(H)>1 \wedge \delta^+(H)>0) + \\
								   &  ~  &\Pr(\delta^-(H)=1 \vee
\delta^+(H)=0)\\ 
								   &  =   & \Pr(\mbox{There is no
matching in $H$} \wedge  \delta^-(H)>1 \wedge \delta^+(H)>0) + o(1)\;.
\end{eqnarray*}
So we are only left with proving that $\Pr(\mbox{There is no matching in $H$} \wedge  \delta^-(H)>1
\wedge \delta^+(H)>0) = o(1)$ when $c\tends +\infty$ and $c\leq 5 \log(kd).$ In fact we prove
something slightly stronger.

\begin{proposition}\label{Min degree and matching}
Let $H$ be the graph introduced in Theorem~\ref{G[A,ga(y)]} and 
 $$
 c = \frac{kd^2}{n} - \log(kd)\;.
 $$
 Suppose that $0\leq c \leq 5\log(kd).$ Then
$$
\Pr(\mbox{There is no perfect matching in $H$} \wedge \delta^-(H)>1 \wedge \delta^+(H)>0) = o(1)\;.
$$
\end{proposition}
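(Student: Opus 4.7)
The plan is to apply the Frobenius--K\"onig theorem (Lemma~\ref{FK}) and bound the probability that a problematic pair $(S,T)$ exists under the minimum-degree hypotheses. Since $y$ is adjacent to every vertex of $\ga(y)$, a problematic pair must satisfy $y \notin S$, so $S \subseteq A \setminus \{y\}$ and $|S| \le kd - 1$. The hypotheses restrict $j:=|S|$ further to $\{2,\dots,kd-2\}$: if $j=1$ with $S=\{y'\}$ then $T=\ga(y)$ and $\ga(y') \cap \ga(y) = \emptyset$ would contradict $\delta^+(H)>0$; if $j=kd-1$ then $|T|=2$, $A\setminus S=\{y\}$, and the inclusion $\gai(z)\cap A \subseteq \{y\}$ for every $z\in T$ would contradict $\delta^-(H)>1$. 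I would then decompose the union bound over $j$ into three regimes.

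For the middle regime, where both $j$ and $kd-j$ are not too small, the naive bound $\binom{kd-1}{j}\binom{kd}{kd+1-j}\,\Pr(\ga(S) \cap T = \emptyset)$ combined with Proposition~\ref{No edge from S to T} already delivers an $o(1)$ contribution, just as in the proof of Proposition~\ref{Large c}. For the small regime near $j=2$, this naive bound is too weak when $c$ is modest, and I would use a refined expected-count argument exploiting $\delta^+(H)>0$. A problematic pair with $|S|=j$ is determined by $S \subseteq A\setminus\{y\}$ of size $j$ together with $U := \ga(y)\setminus T$ of size $j-1$ satisfying $\ga(S) \cap \ga(y) \subseteq U$, and the minimum-degree hypothesis forces $\ga(y') \cap U \neq \emptyset$ for every $y'\in S$. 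Summing over an auxiliary choice of one distinguished neighbour $z^*_{y'} \in \ga(y')\cap U$ per $y'\in S$, and using a Proposition~\ref{No edge from S to T}-type bound to estimate the joint probability that $z^*_{y'}\in \ga(y')$ and $\ga(y') \cap (\ga(y)\setminus U) = \emptyset$ for every $y'\in S$, I expect a per-pair probability of order $((j-1)(d/n)\exp(-d(kd+1-j)/n))^{j}$. Combining with $d/n = (c+\log(kd))/(kd)$ and the $\binom{kd-1}{j}\binom{kd}{j-1}$ ways of choosing $(S,U)$, this gives an expected count of order
\[
\frac{\bigl((j-1)(c+\log(kd))\,e^{-c}\bigr)^{j}}{kd\cdot j!\,(j-1)!},
\]
which sums to $o(1)$ as $c\to+\infty$ since the base $(c+\log(kd))e^{-c}$ tends to $0$.

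The large regime near $j = kd - 2$ is handled symmetrically using $\delta^-(H)>1$: each $z\in T$ has at least two neighbours in $A$, one of which is $y$, so the other lies in $(A\setminus S)\setminus\{y\}$ of size $|T|-2$, and an analogous refined count applies. The main obstacle I anticipate is making the small-$j$ refinement rigorous despite the edge dependencies in biregular bipartite graphs. I would extend the switching argument underlying Proposition~\ref{No edge from S to T} to simultaneously track the ``forced'' edges from each $y'\in S$ to its distinguished $z^*_{y'}\in U$ alongside the ``forbidden'' edges to $\ga(y)\setminus U$, thereby keeping the joint probability within a constant multiplicative factor of the product of the per-vertex estimates.
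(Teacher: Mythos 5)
Your high-level strategy (Frobenius–König, bound the probability of a problematic pair using the minimum-degree hypotheses to force ``extra'' edges) is sound, but the specific refinement you chose is quantitatively too weak and leaves a range of $j=|S|$ uncovered. You use $\delta^+(H)>0$ to force, for each of the $j$ vertices $y'\in S$, one edge into $U=\ga(y)\setminus T$; this multiplies the naive $\Pr(E_0)$ by roughly $((j-1)d/n)^j$, i.e.\ $j$ extra powers of $d/n$. The paper instead uses \emph{$S$-minimality} of the problematic pair (a free reduction: if some problematic pair exists, pick one with $|S|$ minimal), which forces each of the $j-1$ vertices $w\in\ga(y)\setminus T$ to have at least \emph{two} edges into $S$, giving roughly $\left(\binom{j}{2}(d/n)^2\right)^{j-1}$, i.e.\ $2(j-1)$ extra powers of $d/n$. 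For $j\ge 3$ this is strictly more, and the extra factors are exactly what keeps the per-$\ell$ term of the union bound a geometric sequence with ratio $O(\log^2(kd)/\sqrt{kd})=o(1)$ uniformly in $j$.

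Concretely, your own simplification gives a $j$-th term of order
$\bigl((j-1)\mu\bigr)^j / \bigl(kd\cdot j!\,(j-1)!\bigr)$
with $\mu=(c+\log(kd))e^{-c}$. For this to sum to $o(1)$ you need $\mu$ to be bounded (roughly $c\gtrsim\log\log(kd)$). But the hypothesis only gives $c\ge 0$, and even in Theorem~\ref{G[A,ga(y)]}(ii) one only has $c\to+\infty$, possibly slower than $\log\log(kd)$, so $\mu$ may still tend to infinity. When, say, $c$ is bounded, $\mu\asymp\log(kd)$, and the term at $j\approx e\mu\approx e\log(kd)$ is of order $e^{e\mu}/kd\approx(kd)^{e-1}$, which blows up. Meanwhile the naive bound from Proposition~\ref{Large c}, which powers your ``middle regime'', requires $c\ge 5\log(kd)$, or else it only becomes useful for $j\gtrsim(kd)^{3/4}e^{-c/4}$; so there is an uncovered range of $j$, roughly between $\log(kd)$ and $(kd)^{3/4}$, precisely when $c$ is near the bottom of its allowed range. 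The remedy is to replace (or supplement) your $\delta^+$-forcing by the $S$-minimality argument, whose two forced edges per vertex of $\ga(y)\setminus T$ contribute $(d/n)^{2(j-1)}$ and close the gap. (Your observations that $j=1$ is excluded by $\delta^+(H)>0$ and $j=kd-1$ by $\delta^-(H)>1$, and the symmetric treatment of $|T|<|S|$, are correct and compatible with that fix.)
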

\begin{proof}
We once again apply Lemma~\ref{FK}:  no perfect matching exists in $H$ if there are non-empty sets
$S\subseteq A\setminus\{y\}$ and $T\subseteq \ga(y)$ such that $|S|+|T| = kd+1$ and $\ga(S)\cap T =
\emptyset.$ We consider the cases $|S|\leq |T|$ and $|T|<|S|$ separately. Let us start with the
former. 

Note that $\delta^+(H)>0$ implies that if $S$ belongs to a problematic pair, then $|S|>1$. Suppose for a contradiction that $|S|=1$. Then $T=\ga(y)$ and, since $(S,T)$ is problematic, there must be no edges between $S$ and $T$ and hence $\delta^+(H)=0$. 

The size of $S$ lies in the range $2\leq |S| \leq (kd+1)/2$ since $|S|\leq |T|$. The key is to only consider $S$-minimal problematic pairs $(S,T)$. This means that there exists no
proper subset $S'\subsetneq S\subseteq A$ and $T '\subseteq \ga(y)$ with $|S'|+|T'|=kd+1$ such
that $(S',T')$ is a problematic pair. 

We crucially observe that every $w\in \ga(y)\setminus T$ must have at least two edges landing in
$S.$ Otherwise, picking a $w\in \ga(y)\setminus T$ and $s\in S$ such that $E(S,\{w\})=\{sw\}$ and
replacing $(S,T)$ by $(S\setminus\{s\}, T \cup \{w\})$ gives another problematic pair, which contradicts the
minimality of $S$.

Keeping all this in mind let us calculate the probability that such a problematic pair of sets
$(S,T)$ exists. We fix $S\subseteq A\setminus \{y\}$ and $T\subseteq \ga(y)$ and let $j=|S|$. We
have to bound from above the probability that there are no edges from $S$ to $T$ and that all the
vertices in $\ga(y)\setminus T$ have at least two edges starting in $S$. To keep the notation simple
let us write $\ga(y) \setminus T = \{w_1,\dots, w_{j-1}\}$ and also name some events: 
$$
E_0 =\{ \gai(T)\cap S= \emptyset\}
$$
and for $1\leq i \leq j-1,$
$$
 E_i = \bigwedge_{\ell=1}^i \{|\gai(w_{\ell}) \cap S| \geq 2\} \;.
$$
In this notation we have to bound 
\begin{eqnarray}\label{E_0 and E_j-1}
\Pr(E_0 \wedge E_{j-1}) = \Pr(E_0) \prod_{i=1}^{j-1} \Pr(E_i \mid E_0 \wedge E_{i-1})\;.
\end{eqnarray}
Recall that Proposition~\ref{No edge from S to T} states that for a fixed pair of sets $(S,T)$
where $|S|=j,$ the probability that $E_0$ occurs is bounded above by
\begin{eqnarray*}
\Pr(E_0) \leq (1+o(1)) \exp\left(-\frac{d j (kd+1-j)}{n}\right)\;. 
\end{eqnarray*}
So we are left to bound
\begin{eqnarray}\label{E_i given E_0 and E_i-1}
 \Pr(E_i \mid E_0 \wedge E_{i-1})&=& 	\Pr(|\gai(w_{i}) \cap S| \geq 2 \mid
	E_0 \wedge E_{i-1}) \nonumber\\
&\leq& \sum_{s\neq s' \in S} \Pr(\{s,s'\} \subseteq \gai(w_i) \mid
E_0 \wedge E_{i-1})\;.
\end{eqnarray}
Edges in $G$ are chosen with probability $d/n.$ If they were chosen independently, then the right
hand side would be $\tbinom{j}{2} (d/n)^2.$ We show that a similar upper bound holds for \eqref{E_i
given E_0 and E_i-1}.

\begin{lemma}\label{E_i given E_i-1}
Let $s$ and $s'$ be two distinct vertices in $S$ and $1\leq i \leq j-1$ be an integer. In the
notation established above we have  
$$
\Pr(\{s,s'\} \subseteq \gai(w_i) \mid E_0 \wedge E_{i-1}) \leq \left(\frac{d}{n-d}\right)^2\;.
$$ 
\end{lemma}
\begin{proof}
As
$$
\Pr(\{s,s'\} \subseteq \gai(w_i) \mid E_0 \wedge E_{i-1}) 
$$
equals
\begin{eqnarray}\label{product}
\Pr(s \in \gai(w_i) \mid E_0 \wedge E_{i-1}) \; \Pr(s' \in \gai(w_i) \mid E_0 \wedge E_{i-1} \wedge
s \in \gai(w_i))\;,
\end{eqnarray}
it is enough to prove that both probabilities appearing in the product above are at most $d/(n-d).$ Let us
temporarily set $W_i = \{w_1,\dots, w_{i-1}\}.$ 

To bound the first probability in \eqref{product} we observe that
\begin{eqnarray*}
kd &  =   & \E(d^+(s) \mid E_0 \wedge E_{i-1})   \\
     &=&  \sum_{z\in Z\setminus T} \Pr(z\in \ga(s) \mid E_0 \wedge E_{i-1}) \\
     &\geq&  \sum_{z\in Z\setminus (T\cup W_{i-1})} \Pr(z\in \ga(s) \mid E_0 \wedge E_{i-1}) \\
     &  =   & (kn-kd + j-i) \Pr(w_i \in \ga(s) \mid E_0 \wedge E_{i-1}) \\
    &\geq& (kn - kd) \Pr(s \in \gai(w_i) \mid E_0 \wedge E_{i-1}) \;.
\end{eqnarray*}
The third (and final) equality following from the symmetry of random biregular biparite graphs. 

For the second probability in \eqref{product} we proceed similarly 
\begin{eqnarray*}
kd &  =   & \E(d^+(s') \mid E_0 \wedge E_{i-1}  \wedge s \in \gai(w_i))   \\
     &\geq& \sum_{z\in Z\setminus (T\cup W_{i-1})} \Pr(z\in \ga(s') \mid E_0 \wedge E_{i-1}  \wedge
s \in \gai(w_i)) \;.
\end{eqnarray*}
In this case we can not deduce the desired bound from the symmetry of random biregular bipartite graphs since not all $z\in Z\setminus (T\cup W_{i-1})$ have the same role in the graph.
Instead we prove via switching that the probability appearing in the sum above is minimal when $z=w_i.$ 
\begin{claim}
$$
 \Pr(w_i\in \ga(s') \mid E_0 \wedge E_{i-1}  \wedge s \in \gai(w_i))  \leq  \Pr(z\in \ga(s') \mid
E_0 \wedge E_{i-1}  \wedge s \in \gai(w_i)) \;,
$$
for all $z\in Z\setminus (T\cup W_{i-1}).$ 
\end{claim}
Let us quickly deduce the required inequality for the second probability appearing in \eqref{product} before proving the claim:
\begin{eqnarray*}
kd  &\geq& (kn-kd + j-i) \Pr(w_i \in \ga(s') \mid E_0 \wedge E_{i-1}  \wedge s \in \gai(w_i)) \\
    &\geq& (kn - kd) \Pr(s' \in \gai(w_i) \mid E_0 \wedge E_{i-1}  \wedge s \in \gai(w_i))\,.
\end{eqnarray*}

\begin{proof}[Proof of the claim]
Subtracting the probability 
$$
 \Pr(\{w_i,z\}\subseteq \ga(s') \mid E_0 \wedge E_{i-1}  \wedge s \in \gai(w_i)) 
$$
from both sides of the inequality we see that we have to prove that 
$$
 \Pr(w_i \in \ga(s') \wedge z\notin \ga(s') \mid E_0 \wedge E_{i-1}  \wedge s \in \gai(w_i)) 
$$
is at most
$$ 
\Pr(w_i \notin \ga(s') \wedge z\in \ga(s') \mid E_0 \wedge E_{i-1}  \wedge s \in \gai(w_i))\;.
$$
This is equivalent to proving that $|\G_{w_i}|\leq |\G_z|$ where 
$$
\G_{w_i} = \{G\in \G(k,n,d) : w_i\in \ga(s') \wedge z\notin \ga(s')\wedge E_0 \wedge E_{i-1} \wedge s \in \gai(w_i)\}
$$
and 
$$
\G_{z} = \{G\in \G(k,n,d) : z\in \ga(s') \wedge w_i\notin \ga(s')  \wedge E_0 \wedge E_{i-1} \wedge s\in \gai(w_i)\}\;.
$$
Like in the proof of Lemma~\ref{u vs in gai(T')}, we partition the two families of graphs in
subfamilies according to the
size of the intersection $\gai(w_i)\cap \gai(z)$. For any $0\leq\ell\leq d-1$ we define
$$
\G_{w_i,\ell } = \{G \in \G(k,n,d): w_i\in \ga(s') \wedge z\notin \ga(s')\wedge E_0 \wedge E_{i-1} \wedge s \in \gai(w_i) \wedge |\gai(w_i)\cap \gai(z)| = \ell\}
$$
and 
$$
\G_{z, \ell} = \{G \in \G(k,n,d): z\in \ga(s') \wedge w_i\notin \ga(s')  \wedge E_0 \wedge E_{i-1} \wedge s\in \gai(w_i) \wedge |\gai(w_i)\cap \gai(z)| = \ell\}\;.
$$
The parameter $\ell$ is at most $d-1$ in both $\G_{w_i}$ and $\G_z$ as $s'$ lies in exactly one of the two sets $\gai(w_i)$ and
$\gai(z)$. For $0\leq\ell\leq d-1$ we count in two ways $N_\ell,$ the number of switchings (introduced below the proof of Lemma~\ref{Expectation of intersection} in
p.~\pageref{Switching}) between $\G_{w_i,\ell}$ and $\G_{z,\ell}.$
In other
words we double count the number of ordered pairs $(G_{w_i},G_z) \in \G_{w_i,\ell} \times
\G_{z,\ell}$ such that $G_{w_i}$ is a switching of $G_z$ or equivalently that $G_z$ is a switching
of $G_{w_i}.$ 
\begin{figure}[ht]
 \begin{center}
 \includegraphics[width=\textwidth]{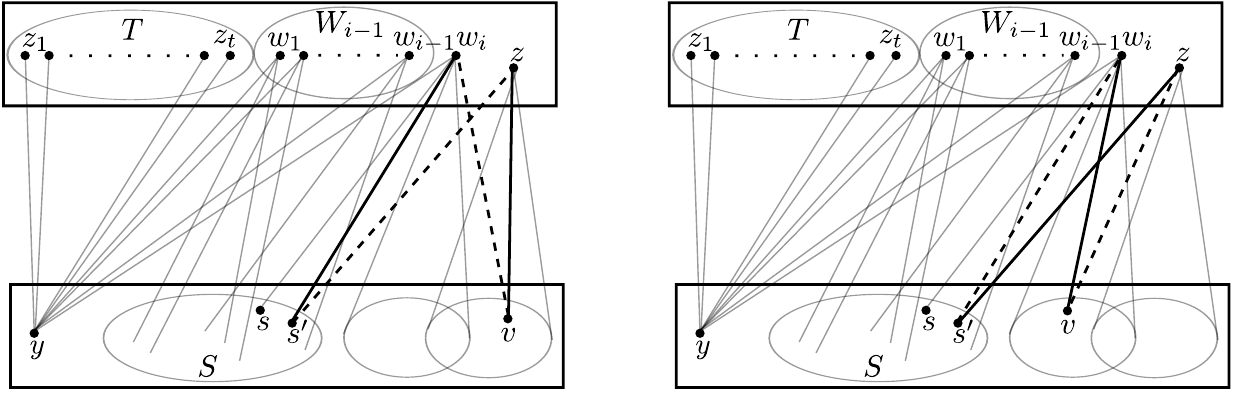}
 \end{center}
 \caption{A graph $G_{w_i}\in\G_{w_i}$ and its switching $G_z\in\G_z$. Solid lines represent edges
and dashed lines missing edges. }
 \label{fig:sw3}
\end{figure}

Take $G_{w_i}\in \G_{w_i,\ell}$ and $v\in \gai(z)\setminus \gai(w_i).$ Applying the $\{s'w_i, vz\}$-switching 
to $G_{w_i}$ results in a member of $ \G_{z,\ell}$ as the switching does not affect
any of the events $E_0, E_{i-1},   \{s\in \gai(w_i)\}$ and $\{|\gai(w_i)\cap \gai(z)| = \ell\}$ (see
Figure~\ref{fig:sw3}). There are
$(d-\ell)$ such $v$ and so 
$$
N_\ell = (d-\ell) |\G_{w_i,\ell}| \;.
$$
Now take $G_z\in \G_{z,\ell}$ and $ v\in \gai(w_i)\setminus (\gai(z) \cup \{y,s\}).$ Applying the
$\{s'z,vw_i\}$-switching to $G_z$ results in a member of $ \G_{w_i,\ell}$ as the switching does
not affect any of the events $E_0, E_{i-1},  \{s\in \gai(w_i)\}$ and $\{|\gai(w_i)\cap \gai(z)| =
\ell\}.$
There are at most $(d-\ell)$ such $v$ and so 
$$
N_\ell \leq (d-\ell) |\G_{z,\ell}| \;.
$$
Thus $|\G_{w_i,\ell}| \leq |\G_{z,\ell}| $ for all $0\leq \ell \leq d-1$ and
$$
 |\G_{w_i}| = \sum_{\ell=0}^{d-1}  |\G_{w_i,\ell}| \leq  \sum_{\ell=0}^{d-1}  |\G_{z,\ell}| = 
|\G_{z}|\;. \qedhere
$$
\end{proof}
This completes the proof of the lemma.
\end{proof}

We resume the proof of Proposition~\ref{Min degree and matching}. Inequality \eqref{E_i given E_0
and E_i-1} becomes
\begin{eqnarray*}
 \Pr(E_i \mid E_{i-1} \wedge E_0) \leq  \binom{j}{2} \left(\frac{d}{n-d}\right)^2\;.
\end{eqnarray*}
Substituting the above and the bound on $\Pr(E_0)$ from Proposition~\ref{No edge from S to T} in
\eqref{E_0 and E_j-1} gives
$$
\Pr(E_0 \wedge E_{j-1}) \leq (1+o(1)) \binom{j}{2}^{j-1} \left(\frac{d}{n-d}\right)^{2(j-1)}
\exp\left(-\frac{d j(kd+1-j)}{n}\right)\;.
$$
To bound the probability there is a $S$-minimal problematic pair with $|S|\leq |T|$ we apply a union
bound. For fixed $j$ there are at most $\tbinom{kd}{j} \, \tbinom{kd}{j-1}$ ways to choose
$(S,T)$ subject to $|S|=j.$ Thus the probability there is a $S$-minimal problematic pair with
$|S|\leq |T|$ is
\begin{eqnarray*}
O\left( \sum_{j=2}^{(kd+1)/2} \binom{kd}{j} \binom{kd}{j-1} \binom{j}{2}^{j-1} 
\left(\frac{d}{n-d}\right)^{2(j-1)} \exp\left(-\frac{d j(kd+1-j)}{n}\right) \right) \;.
\end{eqnarray*}
Applying the well known bound $\binom{a}{b} \leq (\tfrac{e a}{b})^b,$ using that $n-d\geq n/2$ as
$c=O(\log(kd))$ and writing $\ell=j-1$ we get that the probability there is a $S$-minimal
problematic pair with $|S|\leq |T|$ is  
\begin{eqnarray*}
O\left( \sum_{\ell=1}^{kd/2} k d  e^{-kd^2/n} \left(C\, \frac{ k^2 d^4 \,e^{ -d
(kd-\ell) / n}}{n^2}\right)^{\ell} 
 \right)
\end{eqnarray*}
for some large enough constant $C$.

The definition of $c$ gives that $e^{-kd^2/n} = e^{-c} / kd.$ Observing that $kd-\ell\geq
kd/2$, the above sum is
\begin{eqnarray*}
& & O\left( e^{-c} \sum_{\ell=1}^{kd/2} \left(C\, \left(\frac{k d^2}{n}\right)^{2}
\,\frac{e^{-c/2}}{\sqrt{kd}} \right)^{\ell}  \right)\;.
\end{eqnarray*}
As $kd^2/n \leq 6 \log(kd)$ and $e^{-c}\leq 1$ we get that the probability a
problematic pair exists with $|S|\leq |T|$ is
\begin{eqnarray*}
& & O\left(\sum_{l =1}^\infty \left(C'\, \frac{\log^{2}{(kd)}}{\sqrt{kd}}
\right)^{\ell}  \right) 
= O\left(  \frac{\log^{2}{(kd)}}{\sqrt{kd}} \right) 
=  o(1)\;,
\end{eqnarray*}  
where $C'$ is another large enough constant. 

The case when $|T|<|S|$ is similar. This time we chose $T$-minimal problematic pairs. The set $T$
cannot be a singleton as $\delta^-(H)>1.$ The minimality of $|T|$ implies that for every $v\in
A\setminus S$ there are at least two edges starting at $v$ and ending in $T.$ The calculations
needed are very similar to those given above and are omitted. 

\end{proof}
 
Let us quickly recap the proof of Theorem~\ref{G[A,ga(y)]}.
\begin{proof}[Proof of Theorem~\ref{G[A,ga(y)]}]
When $c\tends -\infty$ Lemma~\ref{delta^-(H)} gives that there is no matching in $H$
\textit{whp}.

When $c\tends +\infty$ Lemma~\ref{delta^-(H)} and Lemma~\ref{delta^+(H)} give that
$\Pr(\delta^-(H)>1 \vee \delta^+(H)>0) = 1-o(1).$ Proposition~\ref{Min degree and matching} states
that the probability there is no matching and $\delta^+(H)>0$ or $\delta^-(H)>1$ is $o(1)$ provided
that $c\leq 5 \log(kd).$ So there is a matching in $H$ \whp when $c\tends +\infty$ and $c\leq 5
\log(kd).$ Finally Proposition~\ref{Large c} gives that there is a matching in $H$ \whp when $c\geq 5
\log(kd).$ 
\end{proof} 
 
We conclude the section with some remarks on the probability of the events $A^+_{y'}$ defined in
\eqref{A^+_y'} at the beginning of the proof of Lemma~\ref{delta^+(H)} on p.\pageref{A^+_y'}.
Suppose for a moment that $G$ was chosen uniformly at random from the family of $d^+$-regular bipartite
graphs. In other words $d^+$ would be constant (and equal to $kd$), but no restriction on $d^-$
would exist.
Then the neighbourhoods of vertices in $Y$ would be chosen uniformly at random from all $(kd)$-element subsets
of $Z$ and independently of each other. So the probability that $\ga(y)\cap\ga(y')=\emptyset$ would
be equal to $\tbinom{kn-kd}{kd} /\tbinom{kn}{kd}.$ 

The probability that $A^+_{y'}$ occurs increases in random biregular bipartite graphs as the event $\{\ga(y) = S_1 \wedge
\ga(y')= S_2\}$ is more likely when $S_1\cap S_2 = \emptyset$ than when $S_1\cap S_2 \neq
\emptyset.$ This can be proved via switching and is due to the fact that vertices in $\ga(y)$ have
one of their $d$ incoming edges ``taken up'' by $y.$ Thus the edges coming out of
$y'$ are more likely to land in $Z\setminus\ga(y).$ We do not give the details of the proof here as
a lower bound on $\Pr(A^+_{y'})$ is not necessary. We only state the lower bound and compare it with the
upper bound coming from Proposition~\ref{No edge from S to T}:
\begin{eqnarray}\label{upper and lower bound}
\frac{\binom{kn-kd}{kd}}{\binom{kn}{kd}} \leq \Pr(A^+_{y'}) &\leq& \left(
\frac{\binom{n-2}{d-1}}{\binom{n-1}{d-1}}\right)^{kd}\;.
\end{eqnarray}
For $d=o(n^{2/3})$ both bounds are asymptotically equal to $\exp(-kd^2/n)$, which is easy to see
using Stirling approximation to the binomial coefficients. This reinforces the idea that the
dependence among small sets of edges in $G(k,n,d)$ is small.

%
%

 \section{Remarks on Theorem~\ref{G[A,B]}}\label{Thm 2}

We only outline the proof of Theorem~\ref{G[A,B]} as it is very similar to that of Theorem~\ref{G[A,ga(y)]}. The difference lies in the induced subgraph under consideration. For the former $H$ is
defined to be $G[A,B]$ where $A\subseteq Y$ and $B\subseteq Z$ are sets of size $kd.$ For the latter $B$ is taken to be the neighbourhood of some $y\in A.$ This
complicates some parts of the proof and is why we opted to give the proof of
Theorem~\ref{G[A,ga(y)]}.

When $d=o(\sqrt{n})$ it is straightforward to show there is no matching in $H$ \textit{whp}. By
Lemma~\ref{Expectation of intersection} we know that for any $y\in A$ the expected value
$\E(|\ga(y)\cap B|) =o (1).$ Thus the probability $\Pr(\ga(y) \cap B = \emptyset) = 1-o(1)$ and
consequently there is no matching in $H$ \textit{whp}. 

The first step in dealing with larger values of $d$ is to prove a variation of Proposition~\ref{No
edge from S to T}. As $y$ no longer has a special role it is possible to bound the probability there
are no edges from $S$ to $T$ by looking one by one at the vertices of $S$ or $T.$ In
Proposition~\ref{No edge from S to T} we only worked with the vertices in $T$.  
 
\begin{proposition} \label{Second no edge from S to T}
Let $Y,A, Z,B $ and $G$ be like in the statement of Theorem~\ref{G[A,B]}. Let $S\subseteq A$ and
$T\subseteq  B.$  

Suppose that $z\in T$ and $|S|+d\leq n$. Then 
\begin{eqnarray*}
\Pr(\Gamma(S)\cap T = \emptyset) &\leq&\Pr(\gai(z)\cap S = \emptyset )^{|T|}\\
&\leq& \left(1-\frac{|S|}{n}\right)^{|T|}
\left(1-\frac{|S|}{n-1}\right)^{|T|} \dots \left(1-\frac{|S|}{n-d+1}\right)^{|T|} \\
&\leq& (1+o(1)) \exp{\left(-\frac{d\,|S|\,|T|}{n}\right)}\;.
\end{eqnarray*}
Suppose that $y\in S$ and $|T|+kd\leq kn$. Then
\begin{eqnarray*}
\Pr(\Gamma(S)\cap T = \emptyset) &\leq&\Pr(\ga(y)\cap T = \emptyset )^{|S|}\\
&\leq& \left(1-\frac{|T|}{kn}\right)^{|S|}
\left(1-\frac{|T|}{kn-1}\right)^{|S|} \dots \left(1-\frac{|T|}{kn-kd+1}\right)^{|S|} \\		
&\leq& (1+o(1)) \exp{\left(-\frac{d\,|S|\,|T|}{n}\right)}\;.
\end{eqnarray*}
\end{proposition}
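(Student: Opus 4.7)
The plan is to mirror the proof of Proposition~\ref{No edge from S to T}, noting that the absence of a distinguished vertex actually simplifies matters rather than complicating them. I would prove the first chain of inequalities directly by induction on $|T|$, then derive the second from a natural symmetry of the model.

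For the first chain, the base case $|T| = 1$ is the identity
$$\Pr(\gai(z)\cap S = \emptyset) = \frac{\binom{n-s}{d}}{\binom{n}{d}} = \prod_{i=0}^{d-1}\left(1-\frac{s}{n-i}\right),$$
which holds because $\gai(z)$ is a uniform random $d$-subset of $Y$. For the inductive step, write $T = T' \cup \{z_t\}$; it suffices to establish
$$\Pr(\gai(z_t)\cap S = \emptyset \mid \gai(T')\cap S = \emptyset) \leq \Pr(\gai(z_t)\cap S = \emptyset).$$
To prove this I would replay the argument given in Section~\ref{Preliminary}: order the vertices of $\gai(z_t)$ as $y_1, \ldots, y_d$, set $F_j = \{y_1, \ldots, y_j\}$ (with $F_0 = \emptyset$, since now no vertex must be singled out), and show that for any $u \in S$, any $J \subseteq Y \setminus S$ of size $j$, any $W \subseteq Y \setminus S$ and any $v \in Y \setminus J$,
$$\Pr(y_{j+1} = v \mid F_j = J \wedge \gai(T') = W) \leq \Pr(y_{j+1} = u \mid F_j = J \wedge \gai(T') = W).$$
The case $v \notin W$ follows from symmetry, exactly as in Lemma~\ref{u vs notin gai(T')}. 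The case $v \in W \setminus J$ follows from the switching argument of Lemma~\ref{u vs in gai(T')}: there, the distinguished $y$ featured only as an element of $J$ and played no role in the switching itself, so removing it is harmless. Summing over $v$ and using symmetry within $S$ yields $\Pr(y_{j+1} \notin S \mid (F_j \cup \gai(T'))\cap S = \emptyset) \leq 1 - s/(n-j)$ for $0 \leq j \leq d-1$, and multiplying these bounds completes the induction. The final exponential estimate is standard.

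For the second chain, I would exploit the obvious involution of the model: a random graph from $\G(k, n, d)$ becomes a random graph from $\G(1/k, kn, kd)$ upon interchanging the two layers and reversing every edge (the out-degree $kd$ on the $Y$-side becomes the in-degree on the new $Z'$-side, and vice versa). Applying the first chain to the flipped graph, with $S$ and $T$ interchanged and with $(n, d)$ replaced by $(kn, kd)$, yields exactly the second chain. The main obstacle, as in the original proposition, is verifying that the switching behind Lemma~\ref{u vs in gai(T')} still operates correctly in the absence of $y$; but this amounts only to checking that the switching edges never involved $y$ essentially and that the conditioning events $\{F_j = J\}$ and $\{\gai(T') = W\}$ remain invariant under the switch. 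Everything else is a routine transcription of the arguments already carried out in detail in the previous section.
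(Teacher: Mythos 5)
Your proof is correct. For the first chain of inequalities you take precisely the route the paper outlines in its sketch: establish the base case from the exact hypergeometric formula for $\Pr(\gai(z)\cap S = \emptyset)$, then prove the conditional probabilities decrease by replaying the ordering-plus-switching argument of Proposition~\ref{No edge from S to T}, correctly noting that dropping the distinguished vertex $y$ (so that one starts from $F_0 = \emptyset$ rather than $F_1 = \{y\}$) changes nothing substantive in Lemmas~\ref{u vs notin gai(T')} and~\ref{u vs in gai(T')}.

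For the second chain you take a genuinely different route. The paper simply says ``a similar approach is applied for the second claim,'' which most naturally means repeating the switching analysis with the roles of $S$ and $T$, and of $\ga$ and $\gai$, interchanged---i.e.\ proving $\Pr(\ga(y_i)\cap T = \emptyset \mid \ga(\{y_1,\dots,y_{i-1}\})\cap T = \emptyset)\leq\Pr(\ga(y)\cap T = \emptyset)$ from scratch. You instead observe that swapping the two layers and reversing all edges is a measure-preserving bijection $\G(k,n,d)\to\G(1/k,kn,kd)$, and that applying the first chain to the flipped graph with $(n',d',S',T') = (kn,kd,T,S)$ yields the second chain verbatim (including the hypothesis $|T|+kd\leq kn$ and the exponent $d'|S'||T'|/n' = d|S||T|/n$). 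This duality observation buys you the second half of the proposition for free, avoiding a second switching analysis, and is arguably the cleaner way to organise the argument.
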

\begin{proof}[Sketch of proof]
For $z\in Z$ the probability there are no edges from $S$ to $z$ equals
$$
\Pr(\gai(z)\cap S = \emptyset ) = \frac{\binom{n-|S|}{d}}{\binom{n}{d}} =
\left(1-\frac{|S|}{n}\right) \left(1-\frac{|S|}{n-1}\right) \dots
\left(1-\frac{|S|}{n-d+1}\right)
$$  
as $\gai(z)$ is chosen uniformly at random from all $d$-element subsets of $Y\setminus S.$

Now let $T=\{z_1,\dots,z_t\}.$ It can be shown via a switching argument very similar to that in the
proof of Proposition~\ref{No edge from S to T} that for $2\leq i\leq t$
$$
\Pr(\gai(z_i) \cap S =\emptyset \mid \gai(\{z_1,\dots,z_{i-1}\}) \cap S  = \emptyset ) \leq
\Pr(\gai(z_i) \cap S =\emptyset) = \Pr(\gai(z) \cap S =\emptyset)\;.
$$
This leads to
$$
\Pr(\ga(S) \cap T = \emptyset) \leq \Pr(\gai(z)\cap S = \emptyset )^{|T|} \;.
$$
A similar approach is applied for the second claim. 
\end{proof}

Next we prove a variation of Lemma~\ref{delta^-(H)} for the minimum degree of $H$, $\delta(H) =
\min\{\delta^+(H),\delta^-(H)\}.$ We no longer need to distinguish between $\delta^+(H)$ and
$\delta^-(H)$ since $B\subseteq Z$ is an arbitrary set.

 \begin{lemma}\label{delta^(H)}
 Let $H$ be the graph introduced in Theorem~\ref{G[A,B]} and 
 $$
 c = \frac{kd^2}{n} - \log(kd)\;.
 $$
 Then
 \begin{enumerate}
\item[(i)] $\delta(H)=0$ \whp when  $c \rightarrow - \infty$ or when $d$ is a constant.
\item[(ii)] $\delta(H)>0$ \whp when $c \rightarrow +\infty.$ 
\end{enumerate}
In particular there is no perfect matching in $H$ \whp when $c\rightarrow -\infty.$
 \end{lemma}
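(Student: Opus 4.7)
The plan is to mirror the argument of Lemma~\ref{delta^-(H)}, replacing Proposition~\ref{No edge from S to T} by Proposition~\ref{Second no edge from S to T} and dealing with $\delta^+(H)$ and $\delta^-(H)$ in parallel. For each $y'\in A$ define the event $A^+_{y'}=\{\ga(y')\cap B=\emptyset\}$ and for each $z\in B$ the event $A^-_z=\{\gai(z)\cap A=\emptyset\}$, and set
$$
A^+=\sum_{y'\in A} 1_{A^+_{y'}}, \qquad A^-=\sum_{z\in B} 1_{A^-_z}.
$$
By the symmetry of $\G(k,n,d)$, $\Pr(A^+_{y'})$ does not depend on $y'\in A$ and $\Pr(A^-_z)$ does not depend on $z\in B$. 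Applying Proposition~\ref{Second no edge from S to T} with $S=\{y'\}$, $T=B$ (and dually with $S=A$, $T=\{z\}$) gives $\Pr(A^+_{y'}),\Pr(A^-_z)\leq (1+o(1))\exp(-kd^2/n)$, so $\E(A^+),\E(A^-)=O(e^{-c})$.

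For conclusion $(ii)$, when $c\to+\infty$ both expectations are $o(1)$, and Markov's inequality gives $\Pr(\delta^+(H)=0)=\Pr(A^+\geq 1)=o(1)$ together with $\Pr(\delta^-(H)=0)=o(1)$. Consequently $\delta(H)>0$ \whp.

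For conclusion $(i)$ when $c\to-\infty$, the hypothesis forces $d=o(n^{2/3})$, and a Stirling-type computation analogous to Lemma~\ref{Pr(A^-_z)} yields the matching lower bound $\Pr(A^+_{y'})=(1+o(1))\exp(-kd^2/n)$, whence $\E(A^+)=(1+o(1))e^{-c}\to+\infty$. To bound the variance, I would apply the second form of Proposition~\ref{Second no edge from S to T} with $S=\{y',y''\}$ and $T=B$ to deduce
$$
\Pr(A^+_{y'}\wedge A^+_{y''})=\Pr(\ga(\{y',y''\})\cap B=\emptyset)\leq \Pr(A^+_{y'})^2=\Pr(A^+_{y'})\Pr(A^+_{y''}),
$$
the last equality by symmetry. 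Hence the events are negatively correlated, $\var(A^+)\leq \E(A^+)$, and Chebyshev's inequality produces $\Pr(A^+=0)\leq 1/\E(A^+)=o(1)$. Therefore $\delta^+(H)=0$ \whp and \emph{a fortiori} $\delta(H)=0$ \whp.

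The remaining case of $(i)$, namely $d$ a constant, is much simpler than in Lemma~\ref{delta^-(H)} because we are no longer forced to include the vertex $y$: by linearity $\E(|E(H)|)=|A|\cdot d|B|/n=k^2d^3/n\to 0$, so $H$ has no edges \whp and $\delta(H)=0$ holds trivially. Finally, the ``no perfect matching'' conclusion is immediate, as an isolated vertex of $H$ cannot be matched. The only genuinely new ingredient beyond the template of Lemma~\ref{delta^-(H)} is the variance computation, and that is where I expect the (mild) technical obstacle to lie; everything else is a bookkeeping adaptation to the two-sided, asymmetric-$B$ setting.
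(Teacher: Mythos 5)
Your proposal is correct and follows essentially the same route as the paper: define the two families of vertex-isolation events, use Proposition~\ref{Second no edge from S to T} to control the first and second moments, apply Markov's inequality for $(ii)$ and Chebyshev's inequality with the negative-correlation bound $\var(A^+)\leq\E(A^+)$ for $(i)$. The paper treats the two sides via a single variable $Q=Q^++Q^-$ and quotes the exact hypergeometric formulas rather than invoking the proposition's upper bound, and it disposes of the $d=O(1)$ case in the preamble of the section via $\E(|\ga(y)\cap B|)=o(1)$ rather than via $\E(|E(H)|)\to 0$, but these are only cosmetic differences.
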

\begin{proof}[Sketch of proof]
We consider two types of events: 
\begin{eqnarray*} 
B^+_y = \{\ga(y)\cap B = \emptyset\}\qquad \mbox{ for $y\in A$}
\end{eqnarray*}
and 
\begin{eqnarray*} 
B^-_z = \{\gai(z)\cap A = \emptyset\}\qquad \mbox{ for $z\in B$}\;.
\end{eqnarray*}
We also define the random variables
\begin{eqnarray*}
Q^+ &=& \sum_{y\in A} 1_{B^+_y}\;,\\
Q^- &=& \sum_{z\in B} 1_{B^-_z}\;\text{ and}\\
Q   &=& Q^+ + Q^-\;.
\end{eqnarray*}
The condition $\delta(H)>0$ holds if and only if $Q=0.$

The probability that $B^+_y$ occurs equals
$$
\Pr(B^+_y)= \frac{\binom{kn-kd}{kd}}{\binom{kn}{kd}}\qquad \mbox{ for all $y\in A$}\;,
$$   
as the neighbourhood of $y$ is chosen uniformly from all $(kd)$-elements subsets of $Z.$ Similarly
$$
\Pr(B^-_z)= \frac{\binom{n-kd}{d}}{\binom{n}{d}}\qquad \mbox{ for all $z\in B$}\;.
$$   
When $d=o(n^{2/3})$
$$
\Pr(B^+_y), \Pr(B^-_z)= \Theta\left( \exp\left(-\frac{kd^2}{n}\right)\right)\;.
$$

So
$$
\E(Q) = kd (\Pr(B^+_y)+\Pr(B^-_z)) =  \Theta\left( kd \exp\left(-\frac{kd^2}{n}\right)\right) =
\Theta(e^{-c})\;.
$$
In particular $\E(Q)=o(1)$ if $c\tends +\infty$ and $d=o(n^{2/3})$. If $c\tends +\infty$, but $d$ is
not $o(n^{2/3})$ it is easy to check that $\E(Q) =o(1)$. The
second conclusion follows.

If $c\tends -\infty$, it is adequate to prove that $\Pr(Q^-=0)=o(1)=\Pr(Q^+=0)$. For this we apply
Lemma~\ref{Chebyshev} (Chebyshev's inequality). The upper bound $\var(Q^-)\leq \E(Q^-)$ and $\var(Q^+)\leq \E(Q^+)$ derived in the proof of
Lemma~\ref{delta^-(H)} holds as Proposition~\ref{Second no edge from S to T} gives that $\Pr(B^-_z\wedge B^-_{z'}) \leq
\Pr(B^-_z)^2$ for $z,z'\in Z$ and $\Pr(B^+_y\wedge B^+_{y'}) \leq \Pr(B^+_y)^2$ for $y,y'\in Y.$ 
\end{proof} 

Having proved the first claim of Theorem~\ref{G[A,B]} we proceed to the second. For $c\geq 5
\log(kd)$ we apply Proposition~\ref{Second no edge from S to T} in the way described in the
proof of Proposition~\ref{Large c} to get that there is a matching in $H$ \textit{whp}.

We are only left with showing that when $c\rightarrow +\infty$ and $c\leq 5 \log(kd)$  the
probability
$$
\Pr(\mbox{There is no perfect matching in $H$} \wedge \delta(H)>0)=o(1)\;.
$$
This can be done in a very similar way to the proof of Proposition~\ref{Min degree and matching}.
Some amendments have to be made, for example one has to consider pairs of sets $(S,T)$ where $S\subseteq
A$ and not $A\setminus \{y\}$.


We conclude the section with a quick explanation as to why our method as presented is not strong enough to yield an
(asymptotically) exact expression for the probability that there is a matching in $H;$ something
that Erd\H{o}s and R\'enyi achieved for $B(n,p).$  

As we have seen it is enough to get an asymptotically exact value for the probability
$\Pr(\delta(H)=0).$
This is equivalent to none of the events $B^+_y$ or $B^-_z$ occurring. Erd\H{o}s and R\'enyi used
the inclusion-exclusion principle and exact expressions for the probability of events like
$$
\bigwedge_{y\in S} B^+_y \wedge \bigwedge_{z\in T} B^-_z\;,
$$   
where $S\subseteq A$ and $T\subseteq B$.

It is hard to obtain exact expressions for the probability of this kind of events because of the
lack of independence in choosing the edges in $H.$ The switching double counting method can be
applied to give upper bounds, which appear to be reasonably sharp. Obtaining lower bounds,
like the one in~\eqref{upper and lower bound}, seems to be harder.

%
%
%

\section{Commutative graphs}\label{Commutative}

In this final section we apply the results obtained in Section~\ref{Thm 3} to prove
Theorem~\ref{theorem Commutative}. 

%

\begin{proof}[Proof of Theorem~\ref{theorem Commutative}~$(i)$]
We show that the upper bound on $d$ implies that Pl\"unnecke's upward condition is
violated \whp for all edges in $E(X_{h-2},X_{h-1}).$ So, for $xy\in E(X_{h-2},X_{h-1})$, we show that
\whp there is no perfect matching in $G$ from $\ga(x)$ to $\ga(y).$

We apply the first part of Theorem~\ref{G[A,ga(y)]} with
$A=\ga(x)\subseteq X_{h-1} =Y$ and $\ga(y)\subseteq X_{h}=Z$. Hence $n=|X_{h-1}|=k^{h-1}m$,
$d^+=kd$ and $d^-=d$. The parameters satisfy the condition $kd\leq n$ as $d\leq m$. By Theorem~\ref{G[A,ga(y)]} there is no perfect matching in $G$ from $\ga(x)$ to
$\ga(y)$
\whp provided that
$$
\frac{kd^2}{n}- \log{kd} = \frac{d^2}{k^{h-2}m}- \log{kd} \tends -\infty \;.
$$

Since it is easy to see that Theorem~\ref{theorem Commutative}~$(i)$ holds for $d=o(\sqrt{m})$ (see
Section~\ref{Thm 2}), we assume that $d\geq m^{1/2-\eps}$ for some small $\eps>0$, so that
 $\log{kd}\geq \left(\frac{1}{2}-\eps\right)\log{km}$. The non existence \whp of a perfect
matching between $\ga(x)$ and $\ga(y)$ is implied by the condition
$$
\frac{d^2}{k^{h-2}m}- \left(\frac{1}{2}-\eps\right)\log{km} \tends -\infty \;.
$$
This is in turn implied by the condition
$$
d\leq \sqrt{\frac{1}{3}k^{h-2}m\log{k m}}
$$
and the proof is concluded.

%

\end{proof}

For Theorem~\ref{theorem Commutative}~$(ii)$ we rely on Proposition~\ref{Large c}.

\begin{proof}[Proof of Theorem~\ref{theorem Commutative}~$(ii)$]
For $1\leq j \leq h-1$ set $G_j'=G[X_{j-1},X_{j},X_{j+1}]$ to be the induced subgraph of $G$ on the
vertex set $X_{j-1}\cup X_j\cup X_{j+1}.$ We will calculate the probability that Pl\"unnecke's
conditions are not satisfied in $G_j'$ and then apply a union bound. 

Let $xy\in E(X_{j-1},X_j)$ and $H=G[\ga(x),\ga(y)]$ be the induced subgraph on the vertex set
$(\ga(x),\ga(y))$. Then 
\begin{eqnarray*}
\Pr(\mbox{Pl\"unnecke's upward condition is violated for $xy$}) & = & \Pr(\mbox{There is no perfect matching in $H$}) \\
												  &= & O\left( k^2d^2  \exp \left(- \frac{d^2}{2 k^{j-1} m}\right)\right)\;,
\end{eqnarray*}											
as we see by applying Proposition~\ref{Large c} and noting that the condition on $c$ is satisfied.
There are $k^{j} dm$ such edges and so  
\begin{eqnarray*}
\Pr(\mbox{Pl\"unnecke's upward condition is violated in $G_j'$})  & = & O\left( k^{j+2} d^3 m \exp
\left(- \frac{d^2}{2 k^{j-1} m}\right)\right)\;.
\end{eqnarray*}	

Let $yz\in E(X_{j},X_{j+1})$ and $I_j$ the graph obtained by
reversing the direction of the edges of $G_j'$ (called the \emph{inverse} of $G_j'$). It is easy to
see that Pl\"unnecke's downward condition for $yz$ in $G_j'$ is equivalent to Pl\"unnecke's
upward condition for $zy$ in $I_j$.
A similar calculation gives 
\begin{eqnarray*}
\Pr(\mbox{Pl\"unnecke's downward condition is violated for $yz$}) & = & O\left(d^2  \exp \left(-
\frac{d^2}{2 k^{j-1} m}\right)\right)\;.
\end{eqnarray*}
There are $k^{j+1} dm $ such edges and so   
\begin{eqnarray*}
\Pr(\mbox{Pl\"unnecke's downward condition is violated in $G_j'$})  & = & O\left( k^{j+1} d^3 m \exp
\left(- \frac{d^2}{2 k^{j-1} m}\right)\right)\;.
\end{eqnarray*}	
Adding the two probabilities gives
\begin{eqnarray*}
\Pr(\mbox{Pl\"unnecke's conditions are violated in $G_j'$})  & = & O\left( k^{j+2} d^3 m \exp
\left(-
\frac{d^2}{2 k^{j-1} m}\right)\right)\;.
\end{eqnarray*}	

The right hand side is an increasing function of $j$ as $k\geq 1$ and so  
\begin{eqnarray*}
\Pr(\mbox{Pl\"unnecke's conditions are violated in $G$})  &\leq& \sum_{j=1}^{h-1}
\Pr(\mbox{Pl\"unnecke's conditions are violated in $G_j'$})\\
											& = &
O\left( h k^{h+1} d^3 m \exp \left(-\frac{d^2}{2 k^{h-2} m}\right)\right)\\
											& =
&O\left(h k^{h+1}  m^4 \exp \left(- \frac{d^2}{2 k^{h-2} m}\right)\right)\\
											& = & O(m^{-1/2})\;,
\end{eqnarray*}	
when $d\geq 3 \sqrt{k^{h-2}m \log(h k^{h+1} m)}.$

\end{proof}   
The bounds on $d$ appearing in Theorem~\ref{theorem Commutative} have the same asymptotic order. They 
can be improved slightly, but as we were not able to obtain matching lower and upper bounds we opted to
present a proof as simple as possible. Note also that when $0 < k \leq 1$ one can obtain estimates
on the probability that $G$ is a commutative graph by applying Theorem~\ref{theorem Commutative} to
the inverse of $G.$

We conclude with some remarks linking the present results with those of \cite{GPPlIn}. For fixed $m$
and $k$ the lower bound on $d$ provided in Theorem~\ref{theorem Commutative} surpasses $m$ for
sufficiently large $h.$ This is of course not possible and implies that for a given $m$ and $d$
there is a limit to how large $h$ can be taken to be. This reflects the fact that infinite biregular
commutative graphs do not exist when $k>1$. It should also be noted that explicit constructions are
more economical in $m$ than probabilistic: a path is an infinite commutative graph with augmentation
1 and for integer $k>1$ there exists a commutative biregular graph with 3 layers and augmentation
$k$ whose bottom layer is a doubleton.

\bibliography{pluneke}
\bibliographystyle{amsplain}

 \end{document}